\documentclass[12pt,reqno]{amsart}
%%%%%%%%%%%%%%%%%%%%%%%%%%%%%%%%%%%%%%%%%%%%%%%%%%%%%%%%%%%%%%%%%%%%%%%%%%%%%%%%%%%%%%%%%%%%%%%%%%%%%%%%%%%%%%%%%%%%%%%%%%%%%%%%%%%%%%%%%%%%%%%%%%%%%%%%%%%%%%%%%%%%%%%%%%%%%%%%%%%%%%%%%%%%%%%%%%%%%%%%%%%%%%%%%%%%%%%%%%%%%%%%%%%%%%%%%%%%%%%%%%%%%%%%%%%%
\usepackage{amssymb}
\usepackage{amsfonts}
\usepackage{hyperref}

\setcounter{MaxMatrixCols}{10}
%TCIDATA{OutputFilter=LATEX.DLL}
%TCIDATA{Version=5.50.0.2960}
%TCIDATA{<META NAME="SaveForMode" CONTENT="1">}
%TCIDATA{BibliographyScheme=Manual}
%TCIDATA{Created=Wednesday, March 28, 2012 14:30:02}
%TCIDATA{LastRevised=Thursday, December 14, 2017 09:03:10}
%TCIDATA{<META NAME="GraphicsSave" CONTENT="32">}
%TCIDATA{<META NAME="DocumentShell" CONTENT="Articles\SW\AMS Journal Article">}
%TCIDATA{Language=American English}
%TCIDATA{CSTFile=amsartci.cst}

\def\equationautorefname~#1\null{(#1)\null}

\theoremstyle{plain}

\newtheorem{algorithm}{Algorithm}[section]
\newtheorem{thm}{Thm}

\newtheorem*{Remark-delta}{Remark on all things $\protect\delta$}

\newtheorem{lemma}[algorithm]{Lemma}

\newtheorem{theoremlet}[thm]{Theorem}
\newtheorem{corollarylet}[thm]{Corollary}
\newtheorem{lemmalet}[thm]{Lemma}

\newtheorem*{remarknonum}{Remark}

\newtheorem*{definitionnonum}{Definition}

\newtheorem{proposition}[algorithm]{Proposition}

\numberwithin{equation}{algorithm}

\textwidth=6.5in
\oddsidemargin=0in
\evensidemargin=0in
\textheight=8.5in
\topmargin=0.25in
\input{tcilatex}

\begin{document}
\title{Random Manifolds have no Totally Geodesic Submanifolds}
\author{Thomas Murphy}
\address{Tommy Murphy, Dept. of Mathematics, California State University
Fullerton\\
Fullerton CA 92831}
\email[tmurphy]{tmurphy@fullerton.edu}
\urladdr{http://mathfaculty.fullerton.edu/tmurphy}
\author{Frederick Wilhelm}
\thanks{This work was supported by a grant from the Simons Foundation
(\#358068, Frederick Wilhelm)}
\address{Fred Wilhelm, Dept. of Mathematics, University of California
Riverside, Riverside, Ca 92521. }
\email{fred@math.ucr.edu}
\urladdr{https://sites.google.com/site/frederickhwilhelmjr/home}
\date{March 2017.}
\subjclass[2010]{Primary 53C20, Secondary 53C40, 53A99}

\begin{abstract}
For $n\geq 4$ we show that generic closed Riemannian $n$--manifolds have no
nontrivial totally geodesic submanifolds, answering a question of Spivak. An
immediate consequence is a severe restriction on the isometry group of a
generic Riemannian metric. Both results are widely believed to be true, but
we are not aware of any proofs in the literature.
\end{abstract}

\maketitle

Schoen-Simon showed that every Riemannian manifold admits an embedded
minimal hypersurface (\cite{SchSim}, cf. also \cite{Pitts}). Intuition
suggests that the analogous result about totally geodesic submanifolds is
false. In fact, Spivak writes that it

\begin{quotation}
\textquotedblleft \textit{seems rather clear that if one takes a Riemannian
manifold }$\left( N,\left\langle \cdot ,\cdot \right\rangle \right) $\textit{%
\ `at random', then it will not have any totally geodesic submanifolds of
dimension $>1$. But I must admit that I don't know of any specific example
of such a manifold.\textquotedblright } (\cite{spivak}, p. 39)
\end{quotation}

The existence of specific examples was established by Tsukada in \cite%
{tsukada}, who found some left-invariant metrics on $3$--dimensional Lie
groups without totally geodesic surfaces. In the present paper, we prove
that Spivak's intuition about generic metrics is correct for compact
Riemannian $n$--manifolds with $n\geq 4$.

\begin{theoremlet}
\label{main thm}Let $M$ be a compact, smooth manifold of dimension $\geq 4$.
For any finite $q\geq 2,$ the set of Riemannian metrics on $M$ with no
nontrivial immersed totally geodesic submanifolds contains a set that is
open and dense in the $C^{q}$--topology.
\end{theoremlet}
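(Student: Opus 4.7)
The plan is to recast the problem as an instance of Thom's jet transversality theorem by means of a classical observation of \'E.~Cartan. If $\Sigma \subset M$ is an immersed totally geodesic submanifold of dimension $k$ and $p \in \Sigma$ with $V := T_p\Sigma$, then $V$ is \emph{curvature invariant} at $p$, meaning $R(X,Y)Z \in V$ for all $X,Y,Z \in V$, and the analogous invariance persists for every iterated covariant derivative: $(\nabla^j R)(X_1,\dots,X_{j+2})X_{j+3} \in V$ for all $X_i \in V$ and every $j \geq 0$. Hence, to force the nonexistence of any $k$-dimensional totally geodesic submanifold, it suffices to prevent any $(p, V) \in G_k(TM)$ from satisfying even a suitable finite truncation of these conditions.

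Fix $k \in \{2,\dots,n-1\}$ and an integer $r \geq 2$, and let $B_{k,r} \subset J^r(S^2_+ T^*M)$ denote the projection to the jet bundle of the closed algebraic subset of $J^r \times_M G_k(TM)$ cut out by the Cartan conditions of orders $0$ through $r-2$. Since $G_k(TM) \to M$ has compact fibers, $B_{k,r}$ is closed; using compactness of $M$, the set of metrics $g$ with $j^r g(M) \cap B_{k,r} = \emptyset$ is therefore open in the $C^q$-topology for every $q \geq r$, and density of this set follows from Thom's jet transversality theorem provided that $\mathrm{codim}_{J^r}(B_{k,r}) > n$. Intersecting the resulting open-dense sets over $k \in \{2,\dots,n-1\}$ then yields Theorem~\ref{main thm}.

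The crux is the codimension count. Over each $p \in M$, the fiber of $B_{k,r}$ is the image of a linear subvariety of $J^r_p \times G_k(T_pM)$ of fiber codimension $c_{k,r}$, so $\mathrm{codim}_{J^r}(B_{k,r}) = c_{k,r} - k(n-k)$ and the transversality requirement becomes $c_{k,r} > n + k(n-k)$. A direct count at order $r=2$, accounting for the skew-symmetries and first Bianchi identity of $R$, gives $c_{k,2} = \tfrac{1}{3}(n-k)(k-1)k(k+1)$, already exceeding $n + k(n-k)$ for all $k \geq 3$ and $n \geq 4$. The borderline case $k=2$ is handled at $r=3$: the conditions $(\nabla_X R)(Y,Z)W \in V$ with $X,Y,Z,W \in V$ yield $4(n-2)$ additional independent relations (no second Bianchi cancellation occurs because $\dim V = 2$ is too small to support three linearly independent arguments), giving $c_{2,3} = 6(n-2) > 3n - 4 = n + 2(n-2)$ for all $n \geq 4$.

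The main technical obstacle I anticipate is confirming \emph{genuine independence}: that the higher-order Cartan conditions at $r=3$ are not tautologously implied by the lower-order ones or by the diffeomorphism-invariance of the jet bundle, so the codimension is indeed as counted. I expect this to reduce to a symbol computation showing that the 3-jet components of $g$ encoding $\nabla R$ are transverse to the 2-jet components encoding $R$ after passing to normal coordinates at $p$. Once this is in place, openness follows from the closedness of $B_{k,r}$ and compactness of $M$, density from Thom transversality, and the finite intersection over $k$ completes the argument.
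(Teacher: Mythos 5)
Your approach via jet transversality and a codimension count is a genuinely different route from the paper's. The paper replaces the totally geodesic condition with the single pointwise condition of being \emph{partially geodesic}---equivalently curvature invariant, $R(X,Y)Z\in P$ for all $X,Y,Z\in P$---and removes such planes by explicit local deformations of the metric (supported near a finite cover of the compact set $\mathcal{P}_0$), pieced together by reverse induction on the dimension of the plane. You instead import the higher-order Cartan conditions and apply Thom transversality to a bad set in a jet bundle. The two methods share nothing beyond the starting observation that tangent planes of totally geodesic submanifolds must be curvature invariant.

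The substantive gap in your argument is the order of differentiability, and it traces precisely to your own observation that the $r=2$ count just fails when $k=2$: the curvature-invariance conditions have codimension $2(n-2)$, which equals the Grassmannian fiber dimension $k(n-k)$, so no transversality argument at the level of $2$-jets can work there. (Indeed this is not merely a failed count: already in $\mathbb{R}^4$ a pure Weyl tensor with both $W^{\pm}$ of simple spectrum has finitely many curvature-invariant $2$-planes---take $\omega^{\pm}$ an eigenvector of $W^{\pm}$ with $|\omega^{+}|=|\omega^{-}|$, so that $\omega^{+}\pm\omega^{-}$ is decomposable, and pass to the underlying $2$-plane---and these isolated nondegenerate zeros persist under small perturbation. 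This is exactly the phenomenon the paper's direct-construction approach to Theorem~\ref{partiall geod thm} is meant to handle without a count, and it deserves scrutiny there as well.) Because you are forced to $r=3$ for $k=2$, the set $\left\{g : j^{3}g(M)\cap B_{2,3}=\emptyset\right\}$ is open in the $C^{q}$-topology only for $q\geq 3$; a $C^{2}$-small perturbation may move the third derivatives of $g$, and hence $j^{3}g(M)$, arbitrarily. So your argument establishes Theorem~\ref{main thm} only for $q\geq 3$, not the stated $q\geq 2$. You should either accept this weaker statement explicitly or supply a non-transversality argument for the surface case.

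The independence concern you flag should resolve cleanly. In normal coordinates at $p$ the part of $j^{3}g$ not determined by $j^{2}g$ surjects onto the space of $\nabla R$-values compatible with the second Bianchi identity, and you have correctly noted that the second Bianchi identity imposes no constraints on the slots you use when $\dim V=2$; combined with surjectivity of $j^{2}g\mapsto R(p)$, your $6(n-2)$ conditions are independent. One should also note that $B_{k,r}$, being the image of a Zariski-closed subset of $J^{r}\times_{M}G_{k}(TM)$ under a proper projection, is closed and semialgebraic, so Thom transversality must be applied stratum by stratum, but the bound $\mathrm{codim}\geq c_{k,r}-k(n-k)$ holds for every stratum and that is all you need. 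With these points addressed, and the loss of one derivative acknowledged, the remainder of your plan is sound.
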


Put another way: in a generic Riemannian $n$--manifold with $n\neq 3,$ any
totally geodesic submanifold is either a geodesic or the whole manifold. We
emphasize that this statement applies to all immersed submanifolds---there
is no requirement that the submanifolds be closed or complete.

In \cite{Eb}, Ebin showed most Riemannian manifolds have no isometries other
than the identity. Theorem \ref{main thm} yields a simple, alternative proof
of this for most group actions.

\begin{corollarylet}
Let $M$ be a compact, smooth manifold of dimension $\geq 4.$ Let $G$ act
smoothly and effectively on $M$ so that either:

\begin{enumerate}
\item A subgroup of $G$ has a fixed point set of dimension $\geq 2,$ or

\item $G$ has a subgroup $H$ whose fixed point set is $0$ or $1$
dimensional, and $H$ does not act freely and linearly on a sphere.
\end{enumerate}

Then for any finite $q\geq 2,$ the set of Riemannian metrics on $M$ that are
not $G$--invariant contains a set that is open and dense in the $C^{q}$%
--topology.
\end{corollarylet}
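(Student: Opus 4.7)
The plan in both cases exploits the classical fact that if $g$ is any $G$-invariant Riemannian metric on $M$, then every subgroup $H\leq G$ acts by isometries of $(M,g)$, so the fixed-point locus $M^H$ is a disjoint union of smooth totally geodesic submanifolds; locally near $p\in M^H$, the map $\exp_p$ identifies $M^H$ with the $H$-fixed subspace of $T_pM$, and in particular $\dim_p M^H$ equals the dimension of this fixed subspace. The idea is to force, in each case, a \emph{nontrivial} totally geodesic submanifold (i.e.\ of dimension between $2$ and $n-1$) on every $G$-invariant metric, and then to invoke Theorem~A, whose $C^q$-open dense set of metrics avoiding such submanifolds consists automatically of non-$G$-invariant metrics.

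\textbf{Case (1)} is essentially immediate. Take $H$ to be a nontrivial subgroup with $\dim M^H\geq 2$; effectiveness of the $G$-action precludes $M^H=M$, so $2\leq \dim M^H\leq n-1$, and $M^H$ is the required nontrivial totally geodesic submanifold in every $G$-invariant metric.

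\textbf{Case (2)} requires more work. Pick $p\in M^H$ and consider the isotropy representation of $H$ on $T_pM$. Its fixed subspace has dimension $\dim M^H\leq 1$, so on an orthogonal complement $N\subset T_pM$ of dimension $\geq n-1\geq 3$, the group $H$ fixes only the origin. By the standing hypothesis, the linear $H$-action on the sphere $S(N)$ is not free, so there exists a nontrivial $h\in H$ with a nonzero fixed vector in $N$, yielding $\dim_p M^h\geq 1$.

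The main obstacle is that this alone produces only a geodesic, which Theorem~A does not forbid; one must upgrade to some cyclic (or larger) subgroup $K\leq H$ whose fixed subspace in $T_pM$ has dimension $\geq 2$, so that $M^K$ is genuinely nontrivial. I expect this to follow from a representation-theoretic dimension count on the isotypic decomposition of $N$: the absence of a trivial $H$-summand inside $N$, together with the failure of freeness on $S(N)$, should force some element of $H$ to fix a $2$-plane once $\dim N\geq 3$. For finite $H$ this is particularly transparent, because the absence of a free linear action on a sphere is equivalent to $H$ containing a $(\mathbb{Z}/p)^2$-subgroup for some prime $p$; then a pigeonhole over the three nontrivial characters of $(\mathbb{Z}/2)^2$ (or its analog for $p>2$) produces the desired $h$ as soon as the ambient dimension is at least~$4$. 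Once the $2$-dimensional totally geodesic submanifold is in hand, Theorem~A closes the argument exactly as in Case~(1).
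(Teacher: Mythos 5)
Your Case (1) argument is correct and matches the paper: the fixed point set of a nontrivial subgroup is a totally geodesic submanifold, effectiveness precludes it being all of $M$, and Theorem A finishes the job.

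Your Case (2) has two problems. First, you miss an observation that the paper uses to dispatch the $\dim M^H = 1$ subcase cleanly and which makes your ``upgrade'' unnecessary there. When $h\neq e$ fixes a nonzero $v\in N=(T_pM^H)^\perp$, the fixed subspace of $dh_p$ contains not just $\mathbb{R}v$ but also $T_pM^H$, so $\dim_p M^{\langle h\rangle}\geq \dim M^H+1$. When $\dim M^H=1$ this already gives $\dim_p M^{\langle h\rangle}\geq 2$, a nontrivial totally geodesic submanifold, and you are done --- this is exactly the paper's argument that $H$ must act freely on the unit normal sphere to its one--dimensional fixed point set. You record only $\dim_p M^h\geq 1$ and so believe both subcases need an upgrade; only the $\dim M^H=0$ subcase is genuinely delicate.

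Second, your proposed representation--theoretic ``upgrade'' for the $\dim M^H=0$ subcase is not a proof and contains two concrete errors. The equivalence ``$H$ finite does not act freely and linearly on a sphere $\Longleftrightarrow$ $H$ contains a $(\mathbb{Z}/p)^2$--subgroup'' is false: $S_3$ contains no $(\mathbb{Z}/p)^2$ (all its abelian subgroups are cyclic), yet $S_3$ admits no free linear action on any sphere, since the $2p$--condition fails ($S_3$ itself is a noncyclic subgroup of order $2\cdot 3$). The statement ``the absence of a trivial $H$--summand inside $N$, together with failure of freeness on $S(N)$, forces some element of $H$ to fix a $2$--plane once $\dim N\geq 3$'' is also false: $(\mathbb{Z}/2)^2$ acting on $\mathbb{R}^3$ as the sum of its three nontrivial characters has no trivial summand, is not free on $S^2$, and every nontrivial element fixes exactly a line. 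The pigeonhole you want needs $\dim N\geq 4$, which is what happens in the $\dim M^H=0$ subcase (where $\dim N=n\geq 4$); in the $\dim M^H=1$ subcase $\dim N$ can be $3$ and the claim as stated fails, but there you should be using the $T_pM^H$ contribution instead. Beyond these specific errors, the argument is only asserted (``I expect this to follow\ldots''), not proved, and you give no treatment of $H$ of positive dimension. For comparison, the paper does not try to force a $2$--dimensional fixed subspace in the $0$--dimensional subcase at all: it argues that $H$ acts freely on the unit tangent sphere at a fixed point, under the explicitly stated side condition that no subgroup of $G$ has a one--dimensional fixed point set, so that a nonzero fixed tangent vector would immediately produce a totally geodesic surface. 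Your route is genuinely different, and at present it does not close.
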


To see how this follows from Theorem \ref{main thm}, suppose that $G$ acts
isometrically and effectively on a Riemannian manifold that has no
nontrivial immersed totally geodesic submanifolds. Then the fixed point sets
of $G$ and all of its subgroups have dimension $\leq 1.$ If a subgroup $H$
has a one dimensional fixed point set, then since no subgroup of $H$ can
have a larger fixed point set, $H$ acts freely on any unit normal sphere to
its fixed point set. If no subgroup of $G$ has a one dimensional fixed point
set, but some subgroup $H$ has a zero dimensional fixed point set, then
differentiating $H$ produces a free action on the unit tangent sphere at any
fixed point of $H.$ In particular, if $G$ has a subgroup, $H,$ with a one
dimensional fixed point set, then $H$ is either discrete or isomorphic to $%
S^{1},$ $S^{3},$ or to a $\mathbb{Z}_{2}$--extension of $S^{1},$ and if $H$
is discrete, then it is the fundamental group of a complete manifold of
constant curvature $1.$ $H$ also satisfies these constraints if it has a $0$%
--dimensional fixed point set and does not contain a subgroup with a $1$%
--dimensional fixed point set.

It seems rather easy to construct a deformation that kills the totally
geodesic property for a fixed submanifold or a fixed compact family of
submanifolds (see, e.g., \cite{bryant}). Although there are compactness
theorems for submanifolds with constrained geometry in, e.g., \cite{GuijW},
the space of all submanifolds of a compact Riemannian manifold is not
compact. For example, via the Nash isometric embedding theorem, all
Riemannian manifolds of any fixed dimension $k$ embed isometrically into a
fixed flat $n$--torus if $n>>k.$

To circumvent this difficulty we propose a new concept called \emph{%
partially geodesic}.\emph{\ }It is defined in terms of the following
invariant of self adjoint linear maps.

\begin{definitionnonum}
Let $\Phi :V\longrightarrow V$ be a self adjoint linear map of an inner
product space $V$. For a subspace $W$ of $V,$ we set 
\begin{equation}
\mathcal{I}_{\Phi }\left( W\right) \equiv \max_{\left\{ \left. w\in W\text{ }%
\right\vert \text{ }\left\vert w\right\vert =1\right\} }\left\vert \Phi
\left( w\right) ^{W^{\perp }}\right\vert ,  \label{I measu}
\end{equation}%
where $\Phi \left( w\right) ^{W^{\perp }}$ is the component of $\Phi \left(
w\right) $ that is perpendicular to $W.$
\end{definitionnonum}

Let $V=T_{p}M$ be a tangent space to a Riemannian manifold $\left(
M,g\right) .$ For $v\in W\subset T_{p}M,$ the Jacobi operator $R_{v}=R(\cdot
,v)v:T_{p}M\longrightarrow T_{p}M$ is self adjoint with respect to $g$, and
if 
\begin{equation*}
\mathcal{I}_{R_{v}}\left( W\right) \neq 0
\end{equation*}%
for some $v\in W,$ then $W$ is not tangent to any totally geodesic
submanifold. This motivates the following concept.

\begin{definitionnonum}
For $l\in \left\{ 2,3,\ldots ,n-1\right\} ,$ an $l$--plane $P$ tangent to a
Riemannian $n$--manifold $M$ is called partially geodesic if and only for
all $v\in P,$ 
\begin{equation*}
\mathcal{I}_{R_{v}}\left( P\right) =0.
\end{equation*}
\end{definitionnonum}

Theorem \ref{main thm} is a consequence of

\begin{theoremlet}
\label{partiall geod thm}Let $M$ be a compact, smooth manifold of dimension $%
\geq 4$. For any finite $q\geq 2,$ the set of Riemannian metrics on $M$ with
no partially geodesic $l$--planes is open and dense in the $C^{q}$--topology.
\end{theoremlet}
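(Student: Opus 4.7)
The plan is to treat openness and density separately.

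\emph{Openness.} I would first show that the set of metrics with no partially geodesic $l$-plane is $C^{2}$-open, hence $C^{q}$-open for all $q\geq 2$. Introduce the function
\[
F(g,p,P):=\max_{v\in P,\ |v|_{g}=1}\mathcal{I}_{R(g)_{v}}(P)
\]
on $C^{2}(\mathrm{Sym}^{2}T^{*}M)\times\mathrm{Gr}_{l}(TM)$. Since the curvature tensor is a continuous function of the $2$-jet of $g$, $F$ is $C^{2}$-continuous in $g$, and $\mathrm{Gr}_{l}(TM)$ is compact. The set of good metrics is then $\{g:\min_{(p,P)}F(g,p,P)>0\}$, an open condition, so openness is immediate.

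\emph{Density.} Fix $g_{0}$ and a $C^{q}$-neighborhood $\mathcal{V}$. My plan has two ingredients. First, a pointwise algebraic statement: at each $p\in M$, the set of algebraic curvature tensors on $T_{p}M$ admitting some partially geodesic $l$-plane is contained in a proper subvariety of the curvature-tensor space. I would prove this by a dimension count, comparing the number of independent equations cut out by the polynomial system ``$R(w,v)v\in P$ for all $v,w\in P$'' (after reducing using the pair-symmetries and first Bianchi identity of $R$) against $\dim\mathrm{Gr}(l,T_{p}M)=l(n-l)$. Second, the classical jet-realization fact: given any prescribed algebraic curvature tensor $R_{0}$ at $p$ and any coordinate ball $U\ni p$, there is a compactly supported symmetric $2$-tensor $h$ on $U$ whose first-order effect on the metric changes $R(g_{0})(p)$ by exactly $R_{0}$ while leaving $g_{0}$ unperturbed outside $U$.

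Combining these, I would globalize via a finite coordinate cover $\{U_{i}\}_{i=1}^{N}$ of $M$ and an inductive scheme. At step $i$, choose a perturbation $h_{i}$ supported in $U_{i}$ which, via the realizability fact, drives the curvature tensor at every point of $\overline{U_{i}}$ into the ``good'' algebraic set from the pointwise statement. By the openness already proved, the good status achieved on $\overline{U_{1}\cup\cdots\cup U_{i-1}}$ persists under any sufficiently small subsequent $C^{q}$-perturbation, so I would choose each $h_{i}$ small enough in $C^{q}$ to preserve it. After $N$ steps, the metric $g:=g_{0}+\sum_{i}h_{i}$ lies in $\mathcal{V}$ and has no partially geodesic $l$-plane at any point.

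\emph{Main obstacle.} The crux is the pointwise algebraic statement. In marginal dimensions, where the naive codimension count of the partially-geodesic condition is equal to (or less than) $l(n-l)$, the bad locus on the Grassmann bundle is generically positive-dimensional and can in fact be forced to be nonempty by topological obstructions (for instance, a nontrivial Euler class of the natural rank-$c$ bundle over $\mathrm{Gr}(l,T_{p}M)$ whose section encodes the partially-geodesic data). Overcoming this requires a refined perturbation: either exploiting Bianchi identities more carefully to increase the effective codimension, or working with higher-order jets of the metric so that additional free parameters (covariant derivatives of $R$) enter the perturbation scheme. The globalization step itself is standard once the pointwise statement is in hand; the $C^{q}$-smallness bookkeeping for the $h_{i}$ is routine.
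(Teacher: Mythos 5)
Your openness argument matches the paper's: continuity of $R$ in $g$ for the $C^{2}$-topology plus compactness of $\mathrm{Gr}_{l}(TM)$ makes the condition $\min_{(p,P)}F(g,p,P)>0$ open.

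The density argument, however, has two genuine gaps, and the obstacle you flag at the end is not a side issue but a fatal one for your route in the very case $l=2$. Unwinding the definition, $P$ is partially geodesic precisely when $R_{abc\alpha}=0$ for all $a,b,c$ tangent to $P$ and $\alpha$ normal to $P$ (polarizing $R(w,v)v\in P$ in $v$ and using the skew symmetry $R_{abc\alpha}=-R_{bac\alpha}$ together with the first Bianchi identity forces total skew-symmetry in $a,b,c$, hence vanishing). After subtracting the $\binom{l}{3}(n-l)$ Bianchi relations from the $\binom{l}{2}l(n-l)$ raw equations, the number of independent equations is $\tfrac{l(l^2-1)(n-l)}{3}$. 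Comparing with $\dim\mathrm{Gr}(l,n)=l(n-l)$, the projection of the incidence variety to curvature-tensor space has expected positive codimension only when $l^{2}-1>3$, i.e.\ $l\geq 3$. For $l=2$ the count is exactly marginal, so the naive transversality argument does not show the bad locus is a proper subvariety at a point; your proposal offers no replacement for this case, which is precisely the case needed for the Main Theorem (ruling out totally geodesic surfaces). The paper sidesteps the algebraic-geometry question entirely: rather than proving most algebraic curvature tensors are good, it constructs, near any given partially geodesic plane $P$ with $v,T\in P$ and $n\perp P$, an explicit compactly supported deformation of the off-diagonal metric components $\langle T,n_{3}\rangle$ and $\langle T,n_{4}\rangle$ whose second $v$-derivative is uniformly large, so that $R(T,v)v$ acquires a definite normal component throughout a whole neighborhood of $P$ in the Grassmann bundle. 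Two functions (one a scaled sine, one a scaled cosine) are used so that at least one has large second derivative at every point, which handles the unavoidable inflection points of compactly supported bumps; this is the source of the hypothesis $n\geq 4$.

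The second gap is in your globalization. The jet-realization fact you invoke lets you prescribe the curvature tensor at a single point $p$, not at every point of $\overline{U_{i}}$ simultaneously; a single compactly supported $h_{i}$ cannot freely steer the $2$-jet of $g_{0}$ at a continuum of points, because those jets are constrained to come from one smooth metric. So the step ``drives the curvature tensor at every point of $\overline{U_{i}}$ into the good set'' is not routine bookkeeping — it is the heart of the matter. The paper's Lemma on the local construction resolves this by producing a one-parameter family $g_{s}$ that (i) is unchanged outside a metric ball, (ii) changes $\mathcal{I}_{R_{v}^{s}}(\check P)$ by at least $Ks$ for every plane $\check P$ in an explicit open subset of $\mathrm{Gr}_{l}(TM)$, and (iii) perturbs it by at most $2Ks$ everywhere; these quantitative upper and lower bounds are exactly what allow a finite cover of the compact set $\mathcal{P}_{0}$ of partially geodesic planes to be handled one patch at a time without spoiling earlier patches. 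The paper also runs a reverse induction on $l$ from $n-1$ down to $2$, so that during the step for $l$ the metric already has no partially geodesic $k$-planes for $k>l$ and compactness of the higher Grassmannians gives a uniform margin to preserve; your proposal has no analogue of this step and would need one for bookkeeping across dimensions.
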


Since $q\geq 2$, the curvature tensor is continuous in the $C^{q}$%
--topology. Combined with the compactness of the Grassmannians of $l$%
--planes, it follows that that the set of metrics with no partially geodesic 
$l$--planes is open.

Since the $C^{2}$--topology is finer than the $C^{0}$ and $C^{1}$%
--topologies, it follows from Theorem \ref{partiall geod thm} that the set
of metrics with no partially geodesic $l$--planes is dense in the $C^{0}$
and $C^{1}$--topologies; however, as the curvature tensor is not continuous
in these topologies, it seems likely that the openness assertion fails.

The balance of the paper is therefore devoted to proving the density
assertion of Theorem \ref{partiall geod thm}. To do this we use reverse
induction on $l$ via the following statement.

\bigskip

\noindent $\mathbf{l}^{th}$--\textbf{Partially Geodesic Assertion. }\emph{\
Given }$l\in \left\{ 2,3,\ldots ,n-1\right\} ,$\emph{\ a finite }$q\geq 2,$ 
\emph{and }$\xi >0$, \emph{there is a Riemannian metric }$\tilde{g}$ \emph{on%
} $M$ \emph{that has no partially geodesic }$k$\emph{--planes for all }$k\in
\left\{ l,l+1,\ldots ,n-1\right\} $ \emph{and satisfies }%
\begin{equation*}
\left\vert \tilde{g}-g\right\vert _{C^{q}}<\xi .
\end{equation*}

\bigskip

The rest of the paper is devoted to proving this assertion. To do so, we
exploit a principle given in the following lemma.

\begin{lemmalet}
\label{uber stru lemma}Let $\left\{ g_{s}\right\} _{s\geq 0}$ be a smooth
family of Riemannian metrics on $M.$ Let $R^{s}$ be the curvature tensor of $%
g_{s}.$ Let $\mathcal{P}_{0}$ be the set of partially geodesic $l$--planes
for $g_{0},$ and suppose that for all $k\in \left\{ l+1,\ldots ,n-1\right\}
, $ $g_{0}$ has no partially geodesic $k$--planes.

Suppose further that there are $c,s_{0}>0$ and a neighborhood $\mathcal{U}%
_{0}$ of $\mathcal{P}_{0}$ so that for every $P\in \mathcal{U}_{0}$, every $%
s\in \left( 0,s_{0}\right) ,$ and some $g_{0}$--unit $v\in P,$ 
\begin{equation}
\mathcal{I}_{R_{v}^{s}}\left( P\right) >cs.  \label{unif pos}
\end{equation}

Then for all sufficiently small $s,$ and all $k\in \left\{ l,\ldots
,n-1\right\} $, $\left( M,g_{s}\right) $ has no partially geodesic $k$%
--planes.
\end{lemmalet}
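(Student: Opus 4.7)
The plan is to exhibit, for each $k\in\{l,l+1,\ldots,n-1\}$, a strictly positive lower bound for the quantity
\[
F_k(P,s) \;:=\; \max_{v\in P,\; |v|_{g_0}=1} \mathcal{I}_{R_v^s}(P)
\]
uniformly over all $k$-planes $P$ tangent to $M$ and all sufficiently small $s>0$; here $\mathcal{I}_{R_v^s}(P)$ is computed using the $g_s$-inner product, with respect to which $R_v^s$ is self-adjoint. Since a $k$-plane $P$ is partially geodesic for $g_s$ precisely when $F_k(P,s)=0$, producing such a bound (simultaneously over the finitely many relevant $k$) will finish the lemma. The bundle of $k$-planes over $M$ is compact since $M$ is, and because $\{g_s\}$ is a smooth family its curvature tensors $R^s$ depend continuously on $s$; consequently $F_k$ is continuous in $(P,s)$.

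For each $k\in\{l+1,\ldots,n-1\}$, the hypothesis that $g_0$ has no partially geodesic $k$-planes means $F_k(\cdot,0)>0$ everywhere on the compact bundle of $k$-planes, hence $F_k(\cdot,0)\geq m_k$ for some $m_k>0$. By uniform continuity in $s$, there is $\delta_k>0$ such that $F_k(P,s)\geq m_k/2$ for every $k$-plane $P$ and every $s\in[0,\delta_k]$. For $k=l$, split the bundle of $l$-planes into $\mathcal{U}_0$ and its closed complement $K$. On $K$, the inclusion $\mathcal{P}_0\subset\mathcal{U}_0$ forces $F_l(\cdot,0)>0$; compactness of $K$ and continuity then give the analogous bound $F_l(P,s)\geq m'_l/2$ on $K$ for $s$ small. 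On $\mathcal{U}_0$, hypothesis \eqref{unif pos} directly supplies a vector realising $\mathcal{I}_{R_v^s}(P)>cs$ for every $s\in(0,s_0)$, so $F_l(P,s)>cs>0$ there.

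Choosing $s$ smaller than $s_0$ and than every $\delta_k$ ensures all of these bounds hold at once, from which the conclusion is immediate. The genuinely delicate point is the treatment of $l$-planes near $\mathcal{P}_0$: there $F_l(\cdot,0)$ vanishes, so the compactness/continuity argument used for $k>l$ simply cannot be run, and this is exactly what the structural hypothesis \eqref{unif pos} is designed to repair, by replacing the missing zeroth-order positivity with a uniform \emph{linear} positive rate in $s$. A minor technical nuisance, not an obstacle, is that \eqref{unif pos} is phrased in terms of $g_0$-unit vectors while $\mathcal{I}_{R_v^s}(\cdot)$ is defined via $g_s$-orthogonal projection; since $g_s\to g_0$ smoothly this only rescales constants by factors close to $1$ and does not affect the argument.
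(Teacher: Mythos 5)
Your proposal is correct and follows essentially the same route as the paper: split the $l$-plane Grassmannian into the neighborhood $\mathcal{U}_0$ (handled by hypothesis \eqref{unif pos}) and its compact complement (handled by the zeroth-order positivity of $\mathcal{I}_{R_v^0}$ plus continuity in $s$), and dispose of the higher Grassmannians $\mathcal{G}_k(M)$, $k>l$, by the same compactness-plus-continuity argument. Your introduction of the auxiliary function $F_k(P,s)$ and the explicit remark about the $g_0$-unit versus $g_s$-unit normalization make the continuity step slightly more transparent but do not change the substance of the argument.
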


\begin{proof}
We write $\mathcal{G}_{k}\left( M\right) $ for the Grassmannian of $k$%
--planes tangent to $M.$ Since each $\mathcal{G}_{k}\left( M\right) $ is
compact, there is a $\delta >0$ so that for all $k\in \left\{ l+1,\ldots
,n-1\right\} $ and all $P\in \mathcal{G}_{k}\left( M\right) $ there is a
unit $v\in P$ so that 
\begin{equation*}
\mathcal{I}_{R_{v}^{0}}\left( P\right) >\delta .
\end{equation*}

Similarly $\mathcal{G}_{l}\left( M\right) \setminus \mathcal{U}_{0}$ is
compact. Thus there is a (possibly different) $\delta >0$ so that for all $%
P\in \mathcal{G}_{l}\left( M\right) \setminus \mathcal{U}_{0}$ there is a
unit $v\in P$ so that 
\begin{equation*}
\mathcal{I}_{R_{v}^{0}}\left( P\right) >\delta .
\end{equation*}%
By combining the previous two displays with Inequality (\ref{unif pos}) and
a continuity argument, it follows that for all sufficiently small $s,$ $%
\left( M,g_{s}\right) $ has no partially geodesic $l$--planes.
\end{proof}

In Section \ref{not and conve}, we establish notations and conventions. In
Section \ref{local constr}, we prove Lemma \ref{local constr main lemma},
which implies that the $l^{th}$--Partially Geodesic Assertion holds locally,
in a sense that is quantifiable. This allows us, in Section \ref{global
constr}, to piece together various local deformations and complete the proof
of Theorem \ref{partiall geod thm}. We do not use Lemma \ref{uber stru lemma}
explicitly, but the reader will notice that a similar principle is used in
our global argument in Section \ref{global constr}.

For a quick overview of the proof, imagine that $v$ and $T$ are tangent to a
partially geodesic plane $P$ and $n\perp P.$ The strategy is to change $%
\left\langle T,n\right\rangle $ by a function $f$ that has a relatively
large $2^{nd}$ derivative in the $v$--direction. This has the effect of
giving $R\left( T,v\right) v$ a component in the $n$--direction. In
particular, $P$ is no longer partially geodesic. Since this is a local
deformation, $f$ has compact support and necessarily has inflection points.
To deal with this, we simultaneously change two components of the metric
tensor using two functions whose inflection points occur at different
places. Since this construction requires the presence of two distinct
orthonormal triples, it only works in dimensions $\geq 4.$ Our sense is that
a modification of our ideas might also yield a proof of Theorems \ref{main
thm} and \ref{partiall geod thm} in dimension 3. In fact, Bryant has
outlined a local proof in \cite{bryant}.

\begin{remarknonum}
As mentioned above, Theorem \ref{main thm} is widely believed to be true. In 
\cite{berger}, Berger wrote (without proof) \textquotedblleft a generic
Riemannian manifold does not admit any such submanifold\textquotedblright .

Hermann states in \cite{hermann} that Theorem \ref{main thm} should be true
but that there is little research in this direction.
\end{remarknonum}

\begin{remarknonum}
In \cite{SchSim}, Schoen-Simon showed that every Riemannian $n$--manifold
admits an embedded minimal hypersurface. If $n\geq 8,$ the Schoen-Simon
construction can lead to minimal hypersurfaces with singularities. By
contrast, Theorem \ref{main thm} rules out the possibility of a generic
metric having any totally geodesic submanifold, complete or otherwise. In
particular, generic Riemannian manifolds, of dimension $\geq 4,$ have no
totally geodesic submanifolds with singularities.
\end{remarknonum}

\begin{remarknonum}
Theorem \ref{main thm} asserts that the set of metrics with no totally
geodesic submanifolds has nonempty interior. On the other hand, Theorem \ref%
{partiall geod thm} says that the set of metrics with no partially geodesic
submanifolds is an actual open set in the $C^{q}$--topology$.$ It is not
clear to us whether the set of metrics with no totally geodesic submanifolds
is open. As mentioned above, one difficulty is that the space of
isometrically embedded $k$--manifolds in an $n$--manifold is not compact.
\end{remarknonum}

\bigskip

\noindent \textbf{Acknowledgement: }\emph{We are grateful to referees for so
thoroughly reading the manuscript, to Jim Kelliher, Catherine Searle and the
referees for valuable suggestions, and to Paula Bergen for copyediting the
manuscript. }

\section{Notations and Conventions\label{not and conve}}

Throughout, $(M,g)$ will be a smooth, connected compact Riemannian manifold
of dimension $n\geq 4.$ We will denote the Levi-Civita connection, curvature
tensor, and Christoffel symbols by $\nabla $, $R$, and $\Gamma ,$
respectively. We adopt the sign convention that $R_{xyyx}$ is the sectional
curvature of a plane spanned by orthonormal $x,y\in T_{p}M$. Thus the Jacobi
operator is $R_{v}=R(\cdot ,v)v:T_{p}M\longrightarrow T_{p}M.$ For a nearby
metric $\tilde{g}$, the corresponding objects will be denoted $\tilde{\nabla}
$, $\tilde{R}$, and $\tilde{\Gamma},$ respectively.

Given local coordinates $\left\{ x_{i}\right\} _{i=1}^{n}$, define $\partial
_{i}$ to be the partial derivative in the direction $\frac{\partial }{%
\partial x_{i}}$. At times the notation $\partial _{x_{i}}$ will also be
used for the same object. We let $\mathcal{G}_{l}\left( M\right) $ denote
the Grassmannian of $l$-planes in $M$, and $\pi :\mathcal{G}_{l}\left(
M\right) \longrightarrow M$ the projection of $\mathcal{G}_{l}\left(
M\right) $ to $M$. We fix a Riemannian metric on $\mathcal{G}_{l}\left(
M\right) $ so that $\pi :\mathcal{G}_{l}\left( M\right) \longrightarrow
\left( M,g\right) $ is a Riemannian submersion with totally geodesic fibers
that are isometric to the Grassmannian of $l$-planes in $\mathbb{R}^{n}.$
For a metric space $X$, $A\subset $ $X$, and $r>0,$ we let 
\begin{equation*}
B\left( A,r\right) \equiv \left\{ \left. x\in X\text{ }\right\vert \text{ 
\textrm{dist}}\left( x,A\right) <r\right\} .
\end{equation*}

For some $l\in \left\{ 2,\ldots ,n-1\right\} $ we let $\mathcal{P}_{0}$ be
the set of partially geodesic $l$--planes for $g$.

\section{ The Local construction\label{local constr}}

In this section, we prove Lemma \ref{local constr main lemma}, which can be
viewed as a local version of the $l^{th}$--Partially Geodesic Assertion. In
Section \ref{global constr}, we exploit the fact that $\mathcal{P}_{0}$ is
compact and apply Lemma \ref{local constr main lemma} successively to each
element of a finite open cover $\left\{ O_{i}\right\} _{i}^{G}$ of $\mathcal{%
P}_{0}$. This will produce a finite sequence of metrics $g_{1},g_{2},\ldots
,g_{G}$ where, for example, $g_{2}$ is obtained by applying Lemma \ref{local
constr main lemma} to $g_{1}.$ The idea is that Lemma \ref{local constr main
lemma} kills the partially geodesic property on $O_{k}$ while simultaneously
preserving it on $\cup _{i=1}^{k-1}O_{i}.$ In particular, the set of
partially geodesic $l$--planes for $g_{k}$ is contained in $\cup
_{i=k+1}^{G}O_{i}.$

Because of the successive nature of our construction, in Lemma \ref{local
constr main lemma} we construct a deformation, not of $g,$ but rather of an
abstract metric, $\hat{g},$ that is $C^{q}$--close to $g.$

\begin{lemma}
\label{local constr main lemma} Given $K,\eta >0$, $P\in $ $\mathcal{P}_{0},$
and sufficiently small $\varepsilon _{0},\rho >0,$ there is a $\xi >0$ so
that if 
\begin{equation*}
\left\vert g-\hat{g}\right\vert _{C^{q}}<\xi ,
\end{equation*}%
then there is a $C^{\infty }$--family of metrics $\left\{ g_{s}\right\}
_{s\in \left[ 0,s_{0}\right] }$ so that the following hold.

\begin{enumerate}
\item For all $s,$ $g_{s}=\hat{g}$ on $M\setminus B\left( \pi \left(
P\right) ,\rho +\eta \right) ,$ and $g_{0}=\hat{g}.$

\item Let $\sigma \left( P\right) $ be the section of $\mathcal{G}_{l}\left(
B\left( \pi \left( P\right) ,\rho \right) \right) $ determined by $P$ via
normal coordinates at $\pi \left( P\right) $ with respect to $g.$ For all%
\begin{equation}
\check{P}\in \pi ^{-1}\left( B\left( \pi \left( P\right) ,\rho \right)
\right) \cap B\left( \mathfrak{\sigma }\left( P\right) ,\rho \right) ,
\label{effected set}
\end{equation}%
there is a $v\in \check{P}$ so that%
\begin{equation*}
\left\vert \mathcal{I}_{R_{v}^{s}}\left( \check{P}\right) -\mathcal{I}%
_{R_{v}^{\hat{g}}}\left( \check{P}\right) \right\vert >Ks.
\end{equation*}%
Here $R^{s}$ and $R^{\hat{g}}$ are the curvature tensors of $g_{s}$ and $%
\hat{g},$ respectively.

\item For all $\check{P}\in \mathcal{G}_{l}\left( M\right) $ and all $v\in 
\check{P},$ 
\begin{equation*}
\left\vert \mathcal{I}_{R_{v}^{s}}\left( \check{P}\right) -\mathcal{I}%
_{R_{v}^{\hat{g}}}\left( \check{P}\right) \right\vert \leq 2Ks.
\end{equation*}

\item For all $\check{P}\in \mathcal{G}_{l}\left( M\right) \setminus \left\{
\pi ^{-1}\left( B\left( \pi \left( P\right) ,\rho +\eta \right) \right) \cap
B\left( \mathfrak{\sigma }\left( P\right) ,\rho +\eta \right) \right\} $ and 
$w\in \check{P},$ 
\begin{equation*}
\left\vert \mathcal{I}_{R_{w}^{s}}\left( \check{P}\right) -\mathcal{I}%
_{R_{w}^{\hat{g}}}\left( \check{P}\right) \right\vert \leq \varepsilon _{0}s.
\end{equation*}
\end{enumerate}
\end{lemma}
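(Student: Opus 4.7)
My plan is to implement the two-triples deformation strategy announced in the paper's overview. Working in $g$-normal coordinates $(x_1,\ldots,x_n)$ at $\pi(P)$, I would choose a $g$-orthonormal basis so that $P=\mathrm{span}(\partial_1,\ldots,\partial_l)$, designate $v=\partial_1\in P$ as the main direction, and, using the hypothesis $n\geq 4$, pick two distinct orthonormal triples $(v,T_1,N_1)$ and $(v,T_2,N_2)$ with $T_i\in P$ and $N_i\perp P$; when $n-l\geq 2$ the triples share $T$ and differ in $N$, otherwise they share $N$ and differ in $T$. The deformation then takes the form
\begin{equation*}
g_s \;=\; \hat g \;+\; s\,\psi\sum_{i=1}^{2} f_i\bigl(dx^{T_i}\otimes dx^{N_i} + dx^{N_i}\otimes dx^{T_i}\bigr),
\end{equation*}
where $\psi$ is a smooth cutoff equal to $1$ on $B(\pi(P),\rho)$ and supported in $B(\pi(P),\rho+\eta)$, and $f_1,f_2$ are smooth functions of the variable $x_1$ alone, to be chosen below. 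Conclusion (1) is immediate from the support of $\psi$.

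The next step is to compute how $R^s$ changes. Since the perturbation modifies only the $(T_i,N_i)$ off-diagonal components of $g$ by a function of $x_1$, a direct coordinate calculation, combined with the fact that the $\hat g$-Christoffel symbols at $\pi(P)$ are $O(\xi)$ after shrinking $\xi$, shows that at $q\in B(\pi(P),\rho)$
\begin{equation*}
\bigl\langle R^s(T_i,v)v,N_i\bigr\rangle \;-\; \bigl\langle R^{\hat g}(T_i,v)v,N_i\bigr\rangle \;=\; \tfrac{1}{2}\,s\,\partial_{x_1}^2 f_i(q) \;+\; O(s^2) \;+\; O(s\xi).
\end{equation*}
Choose $f_1,f_2$ to be localized bumps in $x_1$ whose inflection sets are disjoint and separated on the $x_1$-axis by a definite amount, so that $\max_i|\partial_{x_1}^2 f_i|$ enjoys a positive lower bound $c$ throughout the $x_1$-projection of $B(\pi(P),\rho)$. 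At $P$ itself, picking the triple $i$ that witnesses this maximum gives a clear $Ks$-sized change in $\mathcal I$ for any $K<\tfrac{1}{2}c$. A short continuity argument in the Grassmannian and in the $\xi$-perturbation transfers this lower bound from $(v,\pi(P))$ to every $(v',q)$ with $q\in B(\pi(P),\rho)$ and $\check P$ at Grassmannian distance less than $\rho$ from $\sigma(P)$, yielding conclusion (2). Items (3) and (4) then follow from a uniform $C^2$-bound on $\psi f_i$ together with the fact that outside the excluded set either $\psi\equiv 0$ or the directions $(T_i,N_i)$ are nearly perpendicular to $\check P$, so the perturbation's contribution to $\mathcal I$ can be made smaller than $\varepsilon_0 s$ by shrinking $\rho$ and $\eta$ suitably.

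The principal obstacle is the uniformity required in conclusion (2): the lower bound $Ks$ must hold at \emph{every} plane $\check P$ in a full Grassmannian neighborhood of $\sigma(P)$, not merely at $P$ itself, even though any compactly-supported $f$ inevitably has inflection points where $\partial_{x_1}^2 f=0$. The two-triples device is engineered precisely for this obstruction: by arranging the inflection sets of $f_1$ and $f_2$ to avoid one another, at least one triple produces a nonzero second derivative at any given point of the relevant ball, and since $\mathcal I$ is defined as a maximum over unit vectors in $\check P$, we are free to pick whichever triple works. Marrying this pointwise argument with the three independent continuity considerations in play, the discrepancy between $\hat g$ and $g$ controlled by $\xi$, the variation of the base point $q$ inside $B(\pi(P),\rho)$, and the variation of $\check P$ inside $B(\sigma(P),\rho)$, is the delicate bookkeeping part, and is exactly where the hypothesis $n\geq 4$, which guarantees the existence of two distinct orthonormal triples, is consumed.
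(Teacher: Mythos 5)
Your overall strategy is the same as the paper's: deform two off-diagonal metric entries involving two orthonormal triples $(v,T_i,N_i)$, with the triples sharing $T$ or $N$ depending on codimension, so that the dominant curvature change is governed by the second derivative of $f_i$ in the $v$--direction, then use two functions to escape the inflection problem. All of this tracks the paper's proof closely, including the choice of triples and the block form of $g_s-\hat g$.

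However, there is a genuine gap in your construction of $f_1, f_2$. You propose ``localized bumps in $x_1$ whose inflection sets are disjoint and separated $\ldots$ so that $\max_i|\partial_{x_1}^2 f_i|$ enjoys a positive lower bound $c$ throughout the $x_1$-projection of $B(\pi(P),\rho)$.'' But this alone does not close the argument, because the proof also needs $|f_i|_{C^1}$ and all \emph{mixed} second partials of $f_i$ to be small (of order $\varepsilon$), while keeping $\max_i|\partial_1^2 f_i|$ bounded below by a quantity comparable to $K$ and bounded above by roughly $4K$. This smallness is essential: differences of Christoffel symbols are of size $O(|f_i|_{C^1}\,s)$, the cutoff $\psi$ contributes cross terms $\partial\psi\cdot\partial f_i$ to $\partial^2(\psi f_i)$, and without $|f_i|_{C^1}$ small these error terms are not $O(\varepsilon s)$ and your curvature expansion $\frac{1}{2}s\,\partial_1^2 f_i + O(s^2)+O(s\xi)$ has unaccounted $O(s)$--terms of the same order as the main term. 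Moreover, your lower bound needs to be a definite multiple of the prescribed $K$, not merely some $c>0$ supplied by compactness of $[-\rho,\rho]$ and continuity of $\max_i|f_i''|$; otherwise one cannot prescribe $K$ in advance as the lemma requires. The paper resolves precisely this tension with Lemma \ref{jim lemma}: take $h_1,h_2$ proportional to $\sin(t/\eta)$ and $\cos(t/\eta)$ with $\eta$ small, so that $|h_i|_{C^1}<\varepsilon$ while $\max_i|h_i''|$ lies strictly between $\frac{201}{100}K$ and $\frac{399}{100}K$ for \emph{every} $t$, courtesy of $\sin^2+\cos^2=1$; then set $f_i=\chi\cdot(h_i\circ\pi_1)$. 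Your ``bumps with disjoint inflection sets'' does not obviously deliver these simultaneous estimates, and indeed generic bumps of a single scale cannot.

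A minor point: you claim ``the $\hat g$--Christoffel symbols at $\pi(P)$ are $O(\xi)$ after shrinking $\xi$.'' In $g$--normal coordinates that is true at the single point $\pi(P)$, but not on $B(\pi(P),\rho)$. What the argument actually uses is that the Christoffel symbols of $\hat g$ are \emph{bounded} (uniformly over metrics $\xi$--close to $g$), which is enough for the quadratic Christoffel terms in the curvature formula to be $O(\varepsilon s)$ once the Christoffel \emph{differences} $\tilde\Gamma-\hat\Gamma$ are $O(\varepsilon s)$.
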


We will not need Parts 3 and 4 to prove Theorem \ref{main thm}, but have
included them since they are obtained relatively easily and seem to be of
independent interest. {\LARGE \ }

The proof of Lemma \ref{local constr main lemma} occupies the rest of this
section and starts with some preliminary results.

\begin{lemma}
\label{constr of f on M lemma}Given $K,\varepsilon ,\eta >0$, and $P\in $ $%
\mathcal{P}_{0},$ there are coordinate neighborhoods $N$ and $G$ of $\pi
\left( P\right) $ and $C^{\infty }$ functions $f_{1},f_{2}:M\longrightarrow 
\mathbb{R}$ with the following properties.

\begin{enumerate}
\item 
\begin{equation*}
\mathrm{dist}\left( N,M\setminus G\right) <\eta .
\end{equation*}

\item On $N,$ the second partial derivatives in the first coordinate
direction satisfy 
\begin{equation*}
\max \left\{ \left\vert \partial _{1}\partial _{1}f_{1}\right\vert
,\left\vert \partial _{1}\partial _{1}f_{2}\right\vert \right\} >2K.
\end{equation*}

\item In general, 
\begin{equation*}
\max \left\{ \left\vert \partial _{1}\partial _{1}f_{1}\right\vert
,\left\vert \partial _{1}\partial _{1}f_{2}\right\vert \right\} \leq 4K.
\end{equation*}

\item For $j\in \left\{ 1,2,\ldots ,n\right\} ,$ $k\in \left\{ 2,\ldots
,n\right\} ,$ and $i\in \left\{ 1,2\right\} ,$ 
\begin{equation*}
\left\vert \partial _{j}\partial _{k}f_{i}\right\vert <\varepsilon
\end{equation*}%
and%
\begin{equation*}
\left\vert f_{i}\right\vert _{C^{1}}<\varepsilon \text{.}
\end{equation*}

\item On $M\setminus G,$ $f_{1}=f_{2}=0.$
\end{enumerate}
\end{lemma}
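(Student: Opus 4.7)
I would construct $f_1$ and $f_2$ as localized high-frequency, small-amplitude oscillations in the $x_1$--direction with complementary phases, so that their inflection points never coincide. Concretely, fix normal coordinates $(x_1,\ldots,x_n)$ for $g$ centered at $p := \pi(P)$, choose $r>0$ so that $\overline{B(p, r+\eta)}$ lies inside the chart, and set $N := B(p,r)$ and $G := B(p, r+\tfrac{\eta}{2})$. Then $\mathrm{dist}(N,M\setminus G) = \eta/2 < \eta$, giving property 1. Pick a smooth bump $\chi\colon M\to[0,1]$ with $\chi\equiv 1$ on $N$, $\mathrm{supp}(\chi)\subset G$, and $|\partial^\alpha \chi|\leq C(\eta)$ for $|\alpha|\leq 2$. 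In the coordinates, define
\[
f_1(x) := A\chi(x)\sin(\omega x_1), \qquad f_2(x) := A\chi(x)\cos(\omega x_1),
\]
and extend by $0$ outside $G$; property 5 is automatic.

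The key choice is to set $A := 2\sqrt{2}\,K(1+\delta)\,\omega^{-2}$ for some small fixed $\delta>0$ (e.g.\ $\delta = 1/10$), with $\omega$ taken large at the end. On $N$ one has $\chi\equiv 1$, so $\partial_1\partial_1 f_1 = -A\omega^2\sin(\omega x_1)$ and $\partial_1\partial_1 f_2 = -A\omega^2\cos(\omega x_1)$. Since $\max(|\sin\theta|,|\cos\theta|)\geq 1/\sqrt{2}$ for every $\theta$, this yields
\[
\max\{|\partial_1\partial_1 f_1|,|\partial_1\partial_1 f_2|\} \;\geq\; A\omega^2/\sqrt{2} \;=\; 2K(1+\delta) \;>\; 2K
\]
pointwise on $N$, giving property 2. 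Globally, the derivatives of $\chi$ contribute only $O(A)+O(A\omega) = O(\omega^{-1})$ to $\partial_1\partial_1 f_i$, so for $\omega$ large one has $|\partial_1\partial_1 f_i|\leq 2\sqrt{2}\,K(1+\delta)+o(1) < 4K$ (using $2\sqrt{2}<4$), giving property 3.

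For property 4, the crucial observation is that the dangerous $O(1)$-sized term $-A\chi\omega^2\sin(\omega x_1)$ only appears in $\partial_1\partial_1 f_i$: every mixed partial $\partial_j\partial_k f_i$ with $k\geq 2$ carries at least one derivative of $\chi$ by the Leibniz rule, and is therefore bounded by $C(\eta)(A+A\omega) = O(\omega^{-1})$. Likewise $|f_i|_{C^1} = O(A)+O(A\omega) = O(\omega^{-1})$. All of these are $<\varepsilon$ once $\omega$ is large. The main obstacle to overcome is reconciling property 2 (large second derivative) with property 4 (small $C^1$ norm and small mixed partials) in the presence of the bump cutoff (whose derivatives are controlled only by $1/\eta$, not by $\omega$); the resolution is precisely the scaling $A\omega^2\sim K$ with $\omega\to\infty$, which makes the oscillatory part of size $K$ while every other derivative and every bump-error term is $O(\omega^{-1})$, and the sine/cosine pairing that promotes the pointwise inequality $\max(|\sin|,|\cos|)\geq 1/\sqrt 2$ into the desired everywhere-on-$N$ lower bound.
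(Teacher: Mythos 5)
Your proposal is correct and is essentially the same construction the paper uses: the paper factors the argument through two sub-lemmas (Lemma \ref{jim lemma} builds the one-variable pair $h_{\delta,i}(t)=(4-\delta)K\eta^2\sin(t/\eta)$, $(4-\delta)K\eta^2\cos(t/\eta)$, and Lemma \ref{constrution of f lemma} forms $f_i=\chi\cdot(h_i\circ\pi_1)$), whereas you inline both steps, but the underlying mechanism is identical --- a complementary $\sin/\cos$ pair at high frequency $\omega$ with amplitude scaled like $K\omega^{-2}$, multiplied by a cutoff, so that $\max(|\sin|,|\cos|)\geq 1/\sqrt2$ gives the pointwise lower bound on $\partial_1\partial_1 f_i$ while the $C^1$ norm, the mixed partials, and all cutoff error terms are $O(\omega^{-1})$. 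Your constant bookkeeping ($A=2\sqrt2\,K(1+\delta)\omega^{-2}$, $\delta$ small enough that $2\sqrt2(1+\delta)<4$) is a valid variant of the paper's $(4-\delta)$ window.
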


This follows by composing the coordinate chart of $G$ with the functions on
Euclidean space given by the next lemma.

\begin{lemma}
\label{constrution of f lemma}Let $\pi _{1}:\mathbb{R}^{n}\longrightarrow 
\mathbb{R}$ be orthogonal projection onto the first factor. Let $\mathcal{C}$
be a compact subset of $\mathbb{R}^{n}$ with $\pi _{1}\left( \mathcal{C}%
\right) =\left[ a,b\right] ,$ for $a,b\in \mathbb{R}.$ Given $K,\varepsilon
>0$ and a compact set $\mathcal{\tilde{C}}$ with $\mathcal{C}\subset \mathrm{%
int}\left( \mathcal{\tilde{C}}\right) $, there are $C^{\infty }$ functions $%
f_{1},f_{2}:\mathbb{R}^{n}\longrightarrow \mathbb{R}$ with the following
properties.

\begin{enumerate}
\item On $\mathcal{C},$%
\begin{equation*}
\max \left\{ \left\vert \partial _{1}\partial _{1}f_{1}\right\vert
,\left\vert \partial _{1}\partial _{1}f_{2}\right\vert \right\} >2K.
\end{equation*}

\item In general, 
\begin{equation*}
\max \left\{ \left\vert \partial _{1}\partial _{1}f_{1}\right\vert
,\left\vert \partial _{1}\partial _{1}f_{2}\right\vert \right\} \leq 4K.
\end{equation*}

\item For $j\in \left\{ 1,2,\ldots ,n\right\} ,$ $k\in \left\{ 2,\ldots
,n\right\} ,$ and $i\in \left\{ 1,2\right\} ,$ 
\begin{equation*}
\left\vert \partial _{j}\partial _{k}f_{i}\right\vert <\varepsilon
\end{equation*}%
and%
\begin{equation*}
\left\vert f_{i}\right\vert _{C^{1}}<\varepsilon \text{.}
\end{equation*}

\item On $\mathbb{R}^{n}\setminus \mathcal{\tilde{C}},$ $f_{1}=f_{2}=0.$
\end{enumerate}
\end{lemma}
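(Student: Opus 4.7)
My plan is to construct $f_1$ and $f_2$ as rapidly oscillating sinusoidal profiles in the $x_1$ direction, normalized so that the $C^1$-norm is small but the $x_1$-wise second derivative remains of order $K$. First choose a cutoff $\chi\in C_c^{\infty}(\R^n)$ with $0\le\chi\le 1$, such that $\chi\equiv 1$ on a neighborhood of $\mathcal{C}$ contained in $\mathrm{int}(\tilde{\mathcal{C}})$ and $\chi\equiv 0$ on $\R^n\setminus\tilde{\mathcal{C}}$. Fix a constant $A$ with $2\sqrt{2}\,K<A<4K$ (for instance $A=3K$), and for a large parameter $\omega>0$ to be chosen later, set
\[
f_1(x)=\frac{A}{\omega^{2}}\sin(\omega x_1)\,\chi(x),\qquad f_2(x)=\frac{A}{\omega^{2}}\cos(\omega x_1)\,\chi(x).
\]

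Next I verify the four properties. Property (4) is immediate from the support of $\chi$. On $\mathcal{C}$, where $\chi\equiv 1$ and all derivatives of $\chi$ vanish, $\partial_1\partial_1 f_1=-A\sin(\omega x_1)$ and $\partial_1\partial_1 f_2=-A\cos(\omega x_1)$; since $\sin^2(\omega x_1)+\cos^2(\omega x_1)=1$, the identity $\max\{|\partial_1\partial_1 f_1|,|\partial_1\partial_1 f_2|\}\ge A/\sqrt{2}>2K$ holds pointwise on $\mathcal{C}$, which yields (1). For (2), the product rule gives
\[
\partial_1\partial_1 f_1=-A\sin(\omega x_1)\,\chi+\frac{2A}{\omega}\cos(\omega x_1)\,\partial_1\chi+\frac{A}{\omega^{2}}\sin(\omega x_1)\,\partial_1\partial_1\chi,
\]
whose last two terms are $O(1/\omega)$. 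Since $A<4K$ with a definite margin, taking $\omega$ sufficiently large forces the total to be at most $4K$, and similarly for $f_2$. For (3), any mixed partial $\partial_j\partial_k f_i$ with $k\ne 1$ must differentiate $\chi$ at least once (because $\sin(\omega x_1)$ and $\cos(\omega x_1)$ depend only on $x_1$), and each such differentiation of $\chi$ leaves an unmatched factor of $1/\omega$ or $1/\omega^{2}$; a direct computation gives $|\partial_j\partial_k f_i|=O(1/\omega)$. Likewise $|f_i|=O(1/\omega^{2})$ and $|\nabla f_i|=O(1/\omega)$, so $|f_i|_{C^1}=O(1/\omega)$, and for $\omega$ large both quantities are less than $\varepsilon$.

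The main obstacle is merely bookkeeping: the constant $A$ must lie strictly between $2\sqrt{2}\,K$ and $4K$, leaving a finite margin to absorb the $O(1/\omega)$ boundary terms coming from derivatives of $\chi$ so that the pointwise upper bound in (2) is preserved. All such error terms are uniformly controlled by $|\chi|_{C^2}$, which depends only on the chosen cutoff and not on $\omega$, so the strategy is to fix $A$ and $\chi$ first and then take $\omega$ sufficiently large; this simultaneously enforces (2), the $O(1/\omega)$ bounds needed in (3), and completes the construction.
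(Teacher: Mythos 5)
Your proof is correct and takes essentially the same route as the paper: both use a cutoff $\chi$ times a rapidly oscillating $\sin/\cos$ pair in the $x_1$-variable (amplitude $\sim A/\omega^2$, frequency $\omega$), exploit the pointwise bound $\max\{|\sin|,|\cos|\}\ge 1/\sqrt2$ to get the lower bound in (1) while keeping the amplitude strictly below $4K$ for (2), and let the high frequency shrink the $C^1$-norm and the mixed partials. The only cosmetic difference is that the paper factors the one-variable oscillation into a separate calculus lemma and tunes a small parameter $\tilde\varepsilon$ rather than a large frequency $\omega=1/\eta$, whereas you fix the amplitude $A\in(2\sqrt2\,K,4K)$ up front and send $\omega\to\infty$; these are equivalent.
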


To prove Lemma \ref{constrution of f lemma} we will use the following single
variable calculus result.

\begin{lemma}
\label{jim lemma}Given any $K>1$ and $\varepsilon >0,$ there are $C^{\infty
} $ functions $h_{1},h_{2}:\mathbb{R}\longrightarrow \mathbb{R}$ so that for
all $t$ 
\begin{eqnarray}
\frac{201}{100}K &<&\max_{i\in \left\{ 1,2\right\} }\left\{ \left\vert
h_{i}^{\prime \prime }\left( t\right) \right\vert \right\} <\frac{399}{100}K%
\text{ and\label{2nd derrrv}} \\
\left\vert h_{i}\right\vert _{C^{1}} &<&\varepsilon .  \label{lots
wiggle}
\end{eqnarray}
\end{lemma}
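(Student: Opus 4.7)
The plan is to take $h_1$ and $h_2$ to be scaled trigonometric oscillations of a common high frequency, exploiting the Pythagorean identity to keep the pair's second derivatives uniformly away from zero. Specifically, for a large parameter $\omega>0$ to be determined, I would set
\begin{equation*}
h_1(t) = \frac{3K}{\omega^2}\sin(\omega t), \qquad h_2(t) = \frac{3K}{\omega^2}\cos(\omega t).
\end{equation*}
Then $h_1''(t) = -3K\sin(\omega t)$ and $h_2''(t) = -3K\cos(\omega t)$, while $|h_i|_{C^0}\le 3K/\omega^2$ and $|h_i'|_{C^0}\le 3K/\omega$. So after fixing $\omega$ large enough that $\max\{3K/\omega,\,3K/\omega^2\}<\varepsilon$, the $C^1$--bound (\ref{lots wiggle}) is immediate.

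The second-derivative bound (\ref{2nd derrrv}) then follows from the Pythagorean identity: for every $t$,
\begin{equation*}
\max\bigl\{|\sin(\omega t)|,\,|\cos(\omega t)|\bigr\}\ \geq\ \tfrac{1}{\sqrt{2}},
\end{equation*}
so $\max_i|h_i''(t)|$ always lies in the interval $\bigl[3K/\sqrt{2},\,3K\bigr]$. Since $3K/\sqrt{2}\approx 2.12K > \tfrac{201}{100}K$ and $3K<\tfrac{399}{100}K$, both strict inequalities in (\ref{2nd derrrv}) hold uniformly in $t$, and the assumption $K>1$ plays no role beyond ensuring the intended scale.

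The main conceptual point — not really an obstacle, but the reason the lemma demands \emph{two} functions — is that no single $C^1$--small smooth $h$ can satisfy a uniform lower bound on $|h''|$: the inflection points of $h$ are forced by Rolle's theorem applied to $h'$, since otherwise $h'$ would be strictly monotone on $\mathbb{R}$ and hence unbounded, contradicting the $C^1$--smallness. The paired sine/cosine construction sidesteps this obstruction because the zeros of $\sin(\omega t)$ and $\cos(\omega t)$ are staggered by a quarter period. This is exactly the mechanism foreshadowed in the introduction: two independent one-variable wiggles, inserted into distinct metric components, cover each other's inflection points and together produce a curvature perturbation that does not vanish anywhere on the region of interest, which is ultimately why the global argument needs $\dim M\geq 4$.
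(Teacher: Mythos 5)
Your construction is essentially identical to the paper's: both take a pair of sine/cosine oscillations at a common high frequency, scaled so the amplitude controls the $C^1$ norm while the second derivatives have amplitude roughly $3K$, and both invoke $\max\{|\sin t|,|\cos t|\}\geq 1/\sqrt 2$ to get the uniform lower bound. The only cosmetic difference is that you fix the coefficient $3$ outright, whereas the paper writes it as $(4-\delta)$ and observes that any $\delta$ with $\sqrt{2}\cdot\frac{201}{100}<4-\delta<\frac{399}{100}$ works; your choice $3$ lies in that range, so the argument is correct.
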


\begin{proof}
Let $\eta =\frac{\varepsilon }{4\cdot K},$ and for $\delta >0$ set 
\begin{equation*}
h_{\delta ,1}\left( t\right) =\left( 4-\delta \right) K\eta ^{2}\sin \left( 
\frac{t}{\eta }\right) \text{ and }h_{\delta ,2}\left( t\right) =\left(
4-\delta \right) K\eta ^{2}\cos \left( \frac{t}{\eta }\right) .
\end{equation*}%
Then 
\begin{eqnarray*}
h_{\delta ,1}^{\prime }\left( t\right)  &=&\left( 4-\delta \right) K\eta
\cos \left( \frac{t}{\eta }\right) =\frac{\left( 4-\delta \right) }{4}%
\varepsilon \cos \left( \frac{t}{\eta }\right) \text{ and } \\
h_{\delta ,2}^{\prime }\left( t\right)  &=&-\left( 4-\delta \right) K\eta
\sin \left( \frac{t}{\eta }\right) =-\frac{\left( 4-\delta \right) }{4}%
\varepsilon \sin \left( \frac{t}{\eta }\right) ,
\end{eqnarray*}%
so for all $\delta \in \left( 0,4\right) ,$ 
\begin{equation*}
\left\vert h_{\delta ,1}\right\vert _{C^{1}}<\varepsilon \text{ and }%
\left\vert h_{\delta ,2}\right\vert _{C^{1}}<\varepsilon .
\end{equation*}

Also 
\begin{equation*}
h_{\delta ,1}^{\prime \prime }\left( t\right) =-\left( 4-\delta \right)
K\sin \left( \frac{t}{\eta }\right) \text{ and }h_{\delta ,2}^{\prime \prime
}\left( t\right) =-\left( 4-\delta \right) K\cos \left( \frac{t}{\eta }%
\right) .
\end{equation*}%
Since for all $t,$%
\begin{equation*}
\frac{1}{\sqrt{2}}\leq \max \left\{ \left\vert \sin \left( t\right)
\right\vert ,\left\vert \cos \left( t\right) \right\vert \right\} \leq 1,
\end{equation*}
\begin{equation*}
\frac{\left( 4-\delta \right) }{\sqrt{2}}K\leq \max_{i\in \left\{
1,2\right\} }\left\{ \left\vert h_{\delta ,i}^{\prime \prime }\left(
t\right) \right\vert \right\} \leq \left( 4-\delta \right) K,
\end{equation*}%
and (\ref{2nd derrrv}) holds with $h_{1}=h_{\delta ,1}$ and $h_{2}=h_{\delta
,2},$ provided 
\begin{equation*}
\sqrt{2}\frac{201}{100}<\left( 4-\delta \right) <\frac{399}{100}.
\end{equation*}
\end{proof}

\begin{proof}[Proof of Lemma \protect\ref{constrution of f lemma}]
Let $\chi :\mathbb{R}^{n}\longrightarrow \left[ 0,1\right] $ be $C^{\infty }$
and satisfy 
\begin{eqnarray}
\chi |_{\mathcal{C}} &\equiv &1,  \notag \\
\chi |_{\mathbb{R}^{n}\setminus \mathcal{\tilde{C}}} &\equiv &0.
\label{dfn
chi}
\end{eqnarray}%
Let $M>1$ satisfy 
\begin{equation}
\left\vert \chi \right\vert _{C^{2}}\leq M.  \label{C2 norm of chi}
\end{equation}

For $\tilde{\varepsilon}\in \left( 0,\varepsilon \right) ,$ we use Lemma \ref%
{jim lemma} to choose $C^{\infty }$--functions $h_{1},h_{2}:\mathbb{R}%
\longrightarrow \mathbb{R}$ that satisfy 
\begin{equation}
\left\vert h_{i}\right\vert _{C^{1}}<\frac{\tilde{\varepsilon}}{2M}
\label{C1 norm}
\end{equation}%
and 
\begin{equation}
\frac{201}{100}K<\max_{i\in \left\{ 1,2\right\} }\left\{ \left\vert
h_{1}^{\prime \prime }\left( t\right) \right\vert ,\left\vert h_{2}^{\prime
\prime }\left( t\right) \right\vert \right\} <\frac{399}{100}K.
\label{h1 2nd deriv}
\end{equation}

For $i=1,2,$ we set 
\begin{equation}
f_{i}\left( p\right) =\chi \left( p\right) \cdot \left( h_{i}\circ \pi
_{1}\right) \left( p\right) .  \label{dfn of f}
\end{equation}%
Then 
\begin{equation*}
\partial _{k}f_{i}\left( p\right) =\partial _{k}\chi \left( p\right) \cdot
\left( h_{i}\circ \pi _{1}\right) \left( p\right) +\chi \left( p\right)
\cdot \partial _{k}\left( h_{i}\circ \pi _{1}\right) \left( p\right)
\end{equation*}%
and 
\begin{eqnarray*}
\partial _{j}\partial _{k}f_{i}\left( p\right) &=&\partial _{j}\partial
_{k}\chi \left( p\right) \cdot \left( h_{i}\circ \pi _{1}\right) \left(
p\right) +\partial _{j}\chi \left( p\right) \cdot \partial _{k}\left(
h_{i}\circ \pi _{1}\right) \left( p\right) \\
&&+\partial _{k}\chi \left( p\right) \cdot \partial _{j}\left( h_{i}\circ
\pi _{1}\right) \left( p\right) +\chi \left( p\right) \cdot \partial
_{j}\partial _{k}\left( h_{i}\circ \pi _{1}\right) \left( p\right) .
\end{eqnarray*}%
Combining this with (\ref{dfn chi}), (\ref{C2 norm of chi}), (\ref{C1 norm}%
), and (\ref{h1 2nd deriv}) we see that Properties 1, 2, and 3 hold,
provided $\tilde{\varepsilon}$ is sufficiently small. Property 4 follows
from (\ref{dfn chi}) and (\ref{dfn of f}).
\end{proof}

Let $P\in \mathcal{P}_{0}$ be as in Lemma \ref{local constr main lemma} and
have foot point $p.$ If $l=2,$ let $\left\{ v,T,n_{3},n_{4}\right\} $ be an
ordered orthonormal quadruplet at $p$ with%
\begin{equation}
\left\{ v,T\right\} \in P\text{ and }\left\{ n_{3},n_{4}\right\} \text{
normal to }P.  \label{surf choice}
\end{equation}%
If $l=n-1,$ let $\left\{ v,n,T_{3},T_{4}\right\} $ be an ordered $\hat{g}$%
--orthonormal quadruplet at $p$ with 
\begin{equation}
\left\{ v,T_{3},T_{4}\right\} \in P\text{ and }n\text{ normal to }P.
\label{hyp surf choice}
\end{equation}%
If $l\in \left\{ 3,\ldots ,n-2\right\} ,$ let $\left\{ E_{i}\right\}
_{i=1}^{4}$ be an ordered $\hat{g}$--orthonormal quadruplet at $p$ that
satisfies either $\left( \ref{surf choice}\right) $ or $\left( \ref{hyp surf
choice}\right) $. In either case, extend the ordered quadruplet to a
coordinate frame $\left\{ E_{i}\right\} _{i=1}^{n}$.

Choose $g_{s}$ so that with respect to the ordered frame $\left\{
E_{i}\right\} _{i=1}^{n},$ the matrix of $g_{s}-\hat{g}$ is $0$ except for
the upper $\left( 4\times 4\right) $--block which is 
\begin{equation}
\left\{ g_{s}-\hat{g}\right\} _{l,m}=\left( 
\begin{array}{llll}
0 & 0 & 0 & 0 \\ 
0 & 0 & sf_{1} & sf_{2} \\ 
0 & sf_{1} & 0 & 0 \\ 
0 & sf_{2} & 0 & 0%
\end{array}%
\right) ,  \label{dfn ofg tilde}
\end{equation}%
where to construct $f_{1}$ and $f_{2}$ we apply Lemma \ref{constr of f on M
lemma} with $N=B\left( \pi \left( P\right) ,\rho \right) .$

To simplify notation, we write $\tilde{g}$ for $g_{s}$ and use $\widetilde{}$
for objects associated to $\tilde{g}.$ Recall (see, e.g., \cite{Pet}) that
with respect to $\left\{ E_{i}\right\} _{i=1}^{n},$ the Christoffel symbols
are 
\begin{equation*}
\tilde{\Gamma}_{ij,k}\equiv \tilde{g}\left( \tilde{\nabla}%
_{E_{i}}E_{j},E_{k}\right) 
\end{equation*}%
and 
\begin{equation}
\tilde{R}_{ijkl}=\partial _{i}\tilde{\Gamma}_{jk,l}-\partial _{j}\tilde{%
\Gamma}_{ik,l}+\tilde{g}^{\sigma \tau }\left( \tilde{\Gamma}_{ik,\sigma }%
\tilde{\Gamma}_{jl,\tau }-\tilde{\Gamma}_{jk,\sigma }\tilde{\Gamma}_{il,\tau
}\right) ,  \label{curv via christ eqn}
\end{equation}%
where $\tilde{g}^{\sigma \tau }$ are the coefficients of the inverse $\left(
\left\{ \tilde{g}\right\} _{\sigma \tau }\right) ^{-1}$ of $\left\{ \tilde{g}%
\right\} _{\sigma \tau },$ and the Einstein summation convention is being
used. Combining Lemma \ref{constr of f on M lemma} with the definition of $%
\tilde{g}$ gives us

\begin{proposition}
\label{g inv prop}The coefficients $\hat{g}^{l,m}$ and $\tilde{g}^{l,m}$ of
the inverses of $\left\{ \hat{g}\right\} _{l,m}$ and $\left\{ \tilde{g}%
\right\} _{l,m}$ satisfy%
\begin{equation*}
\left\vert \hat{g}^{l,m}-\tilde{g}^{l,m}\right\vert <O\left( \varepsilon
s\right) .
\end{equation*}
\end{proposition}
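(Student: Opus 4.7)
The plan is to use the elementary matrix identity
\[
\tilde{g}^{-1} - \hat{g}^{-1} = \hat{g}^{-1}(\hat{g} - \tilde{g})\tilde{g}^{-1}
\]
together with the explicit componentwise bound on $\tilde{g} - \hat{g}$ dictated by equation (\ref{dfn ofg tilde}). The key inputs are that every entry of $\tilde{g} - \hat{g}$ is one of $0$, $sf_{1}$, or $sf_{2}$, and that Part 4 of Lemma \ref{constr of f on M lemma} forces $|f_{i}| < \varepsilon$ pointwise on $M$.

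First I would record the componentwise bound: combining (\ref{dfn ofg tilde}) with the $C^{1}$-bound on $f_{i}$ from Lemma \ref{constr of f on M lemma}.4, each entry of the matrix $\tilde{g} - \hat{g}$ is bounded in absolute value by $s\varepsilon$ on all of $M$. Next I would establish uniform bounds on the two inverses. Since $M$ is compact and $g$ is smooth, the coefficients $g^{l,m}$ are uniformly bounded; choosing $\xi$ small enough guarantees that $\hat{g}$ is uniformly positive definite, and hence the $\hat{g}^{l,m}$ are uniformly bounded on $M$. Because $\tilde{g} = \hat{g} + O(s\varepsilon)$ in $C^{0}$, for all sufficiently small $s$ the metric $\tilde{g}$ is also uniformly positive definite, so $\tilde{g}^{l,m}$ is uniformly bounded on $M$. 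Plugging these uniform bounds and the entrywise bound on $\tilde{g} - \hat{g}$ into the matrix identity above, and using that the matrices involved are of fixed size $n\times n$, produces
\[
|\hat{g}^{l,m} - \tilde{g}^{l,m}| < O(\varepsilon s),
\]
which is exactly the claimed estimate.

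I do not anticipate any real obstacle here; the proposition is essentially a routine perturbation statement. The only point worth a sentence is the uniform positive definiteness of $\tilde{g}$, which follows immediately from the $C^{0}$-smallness of the perturbation together with compactness of $M$, and which is needed to make sense of $\tilde{g}^{-1}$ with uniformly bounded entries.
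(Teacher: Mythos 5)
Your proposal is correct and fills in exactly the routine argument the paper leaves implicit: the paper states the proposition follows from combining Lemma \ref{constr of f on M lemma} with the definition of $\tilde{g}$ and gives no further detail, and the matrix identity plus the entrywise bound $|sf_i| < s\varepsilon$ and uniform invertibility on the compact $M$ is precisely what that combination amounts to.
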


Using Equation (\ref{dfn ofg tilde}) and Lemma \ref{constr of f on M lemma},
we will show

\begin{proposition}
\label{Gamma control prop}Writing $\left( \tilde{\Gamma}-\hat{\Gamma}\right)
_{jk,l}$ for $\tilde{\Gamma}_{ij,k}-\hat{\Gamma}_{ij,k}$ we have 
\begin{equation}
\left\vert \left( \tilde{\Gamma}-\hat{\Gamma}\right) _{jk,l}\right\vert
<O\left( \varepsilon s\right) .  \label{Gamma diff inequal}
\end{equation}

Let $i,j,k,l$ be arbitrary elements of $\left\{ 1,2,\ldots ,n\right\} .$
Then all expressions 
\begin{equation*}
\partial _{i}\left( \tilde{\Gamma}-\hat{\Gamma}\right) _{jk,l}
\end{equation*}%
are $\leq O\left( \varepsilon s\right) $ except for%
\begin{equation}
\begin{array}{ccccc}
\partial _{v}\left( \tilde{\Gamma}-\hat{\Gamma}\right) _{2v,3} & = & 
\partial _{v}\left( \tilde{\Gamma}-\hat{\Gamma}\right) _{v3,2} & = & 
-\partial _{v}\left( \tilde{\Gamma}-\hat{\Gamma}\right) _{23,v} \\ 
\shortparallel &  & \shortparallel &  & \shortparallel \\ 
\partial _{v}\left( \tilde{\Gamma}-\hat{\Gamma}\right) _{v2,3} & = & 
\partial _{v}\left( \tilde{\Gamma}-\hat{\Gamma}\right) _{3v,2} & = & 
-\partial _{v}\left( \tilde{\Gamma}-\hat{\Gamma}\right) _{32,v}%
\end{array}
\label{big curv}
\end{equation}%
and%
\begin{equation}
\begin{array}{ccccc}
\partial _{v}\left( \tilde{\Gamma}-\hat{\Gamma}\right) _{2v,4} & = & 
\partial _{v}\left( \tilde{\Gamma}-\hat{\Gamma}\right) _{v4,2} & = & 
-\partial _{v}\left( \tilde{\Gamma}-\hat{\Gamma}\right) _{24,v} \\ 
\shortparallel &  & \shortparallel &  & \shortparallel \\ 
\partial _{v}\left( \tilde{\Gamma}-\hat{\Gamma}\right) _{v2,4} & = & 
\partial _{v}\left( \tilde{\Gamma}-\hat{\Gamma}\right) _{4v,2} & = & 
-\partial _{v}\left( \tilde{\Gamma}-\hat{\Gamma}\right) _{42,v},%
\end{array}
\label{other big curv}
\end{equation}%
where we write $v$ for the first element of our frame to emphasize its
special role. The expressions in (\ref{big curv}) and (\ref{other big curv})
are $\leq 2\tilde{K}s$ everywhere, and on $N,$%
\begin{equation*}
\max \left\{ \left( \ref{big curv}\right) ,\left( \ref{other big curv}%
\right) \right\} \geq Ks.
\end{equation*}
\end{proposition}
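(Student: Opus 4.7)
The plan is to use the coordinate-frame formula
\[
\Gamma_{jk,l}=\tfrac{1}{2}\bigl(\partial_{j}g_{kl}+\partial_{k}g_{jl}-\partial_{l}g_{jk}\bigr),
\]
which involves no inverse metric. Subtracting the two versions gives
\[
(\tilde{\Gamma}-\hat{\Gamma})_{jk,l}=\tfrac{1}{2}\bigl(\partial_{j}(g_{s}-\hat{g})_{kl}+\partial_{k}(g_{s}-\hat{g})_{jl}-\partial_{l}(g_{s}-\hat{g})_{jk}\bigr).
\]
By (\ref{dfn ofg tilde}) the only nonzero entries of $g_{s}-\hat{g}$ sit at the positions $(2,3),(3,2),(2,4),(4,2)$, with values $sf_{1}$ or $sf_{2}$; combined with $|f_{i}|_{C^{1}}<\varepsilon$ from Property 4 of Lemma \ref{constr of f on M lemma}, this immediately yields (\ref{Gamma diff inequal}).

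To control $\partial_{i}(\tilde{\Gamma}-\hat{\Gamma})_{jk,l}$, I would differentiate once more, so that each summand becomes $\tfrac{s}{2}$ times a second partial of $f_{1}$, $f_{2}$, or $0$. Property 4 of Lemma \ref{constr of f on M lemma} bounds every second partial of $f_{m}$ having a direction index $\geq 2$ by $\varepsilon$, so each summand is $O(\varepsilon s)$ unless \emph{both} derivatives are $\partial_{v}=\partial_{1}$ \emph{and} they act on a nonvanishing entry. This forces a clean case split: the first, second, or third summand of the Christoffel formula contributes non-negligibly only when $(i,j)=(v,v)$, $(i,k)=(v,v)$, or $(i,l)=(v,v)$, respectively, with the remaining index pair lying in $\{(2,3),(3,2),(2,4),(4,2)\}$. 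Because $v=1$ is disjoint from $\{2,3,4\}$, the three subcases are mutually exclusive, and in each subcase the other two summands vanish identically since their relevant metric entries take the form $(g_{s}-\hat{g})_{v\cdot}$ or $(g_{s}-\hat{g})_{\cdot v}$, both of which are zero. The surviving twelve triples are exactly those listed in (\ref{big curv}) and (\ref{other big curv}).

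A direct substitution then gives each of these twelve expressions as $\pm \tfrac{s}{2}\partial_{v}^{2}f_{m}$ \emph{exactly}, with a plus sign coming from the $j=v$ and $k=v$ subcases and a minus sign from the $l=v$ subcase. The horizontal equalities in the displays then follow at once, while the vertical equalities reduce to the index symmetry $\Gamma_{jk,l}=\Gamma_{kj,l}$. The quantitative bounds then come directly from Lemma \ref{constr of f on M lemma}: Property 3 gives $|\partial_{v}^{2}f_{m}|\leq 4K$, so each displayed expression is bounded by $2Ks$ everywhere (i.e., the stated bound with $\tilde{K}=K$), and Property 2 gives $\max_{m}|\partial_{v}^{2}f_{m}|>2K$ on $N$, which forces $\max \geq Ks$ there.

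The only real obstacle is organizational: one must verify that the twelve displayed triples account for \emph{all} non-$O(\varepsilon s)$ contributions and that the asserted equalities hold with no residual error. Both reductions follow from the structural observation that the deformation (\ref{dfn ofg tilde}) is supported on the index block $\{2,3,4\}$ while the activating second-derivative direction is the disjoint index $v=1$, so every mixed partial that straddles these two sets is automatically $O(\varepsilon s)$ and cannot contaminate the dominant $\partial_{v}^{2}f_{m}$ terms.
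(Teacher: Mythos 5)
Your proposal is correct and follows essentially the same route as the paper: both start from the coordinate formula $\Gamma_{jk,l}=\tfrac{1}{2}(\partial_j g_{kl}+\partial_k g_{jl}-\partial_l g_{jk})$, observe that the difference $g_s-\hat g$ is supported on the $\{2,3\}$ and $\{2,4\}$ entries, invoke Property~4 of Lemma~\ref{constr of f on M lemma} to kill any second partial involving a direction index $\geq 2$, and then compute the surviving $\partial_v^2$ terms exactly as $\pm\tfrac{s}{2}\partial_v\partial_v f_m$ before applying Properties~2 and~3. Your case enumeration (the three summands each forcing a distinct pair of indices to equal $v$, with mutual exclusivity because $1\notin\{2,3,4\}$) is a cleaner justification than the paper's, which simply asserts "unless the indices correspond to the situation in (\ref{big curv}) or (\ref{other big curv})," but it is the same underlying argument.
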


\begin{proof}
Inequality (\ref{Gamma diff inequal}) follows from the fact that $\left\vert
sf_{i}\right\vert _{C^{1}}<\varepsilon s.$

To prove the remainder note 
\begin{eqnarray*}
\partial _{i}\tilde{\Gamma}_{jk,l} &=&\partial _{E_{i}}\tilde{g}\left( 
\tilde{\nabla}_{E_{j}}E_{k},E_{l}\right) \\
&=&\frac{1}{2}\partial _{E_{i}}\left[ \partial _{E_{k}}\tilde{g}\left(
E_{j},E_{l}\right) +\partial _{E_{j}}\tilde{g}\left( E_{l},E_{k}\right)
-\partial _{E_{l}}\tilde{g}\left( E_{k},E_{j}\right) \right] \\
&=&\frac{1}{2}\partial _{E_{i}}\left[ \partial _{E_{k}}\left( \tilde{g}-\hat{%
g}\right) \left( E_{j},E_{l}\right) +\partial _{E_{j}}\left( \tilde{g}-\hat{g%
}\right) \left( E_{l},E_{k}\right) -\partial _{E_{l}}\left( \tilde{g}-\hat{g}%
\right) \left( E_{k},E_{j}\right) \right] \\
&&+\frac{1}{2}\partial _{E_{i}}\left[ \partial _{E_{k}}\hat{g}\left(
E_{j},E_{l}\right) +\partial _{E_{j}}\hat{g}\left( E_{l},E_{k}\right)
-\partial _{_{E_{l}}}\hat{g}\left( E_{k},E_{j}\right) \right] .
\end{eqnarray*}%
Combining this with Lemma \ref{constr of f on M lemma} and the definition of 
$\tilde{g}$ gives us 
\begin{eqnarray*}
\partial _{i}\tilde{\Gamma}_{jk,l} &=&\partial _{E_{i}}\hat{g}\left( \nabla
_{E_{j}}E_{k},E_{l}\right) +O\left( \varepsilon s\right) \\
&=&\partial _{i}\hat{\Gamma}_{jk,l}+O\left( \varepsilon s\right) ,
\end{eqnarray*}%
unless the indices correspond to the situation in (\ref{big curv}) or (\ref%
{other big curv}). In the former case,%
\begin{eqnarray*}
\partial _{v}\left( \tilde{\Gamma}-\hat{\Gamma}\right) _{2v,3} &=&\frac{1}{2}%
\partial _{E_{v}}\left[ \partial _{E_{v}}\left( \tilde{g}-\hat{g}\right)
\left( E_{2},E_{3}\right) +\partial _{E_{2}}\left( \tilde{g}-\hat{g}\right)
\left( E_{3},E_{v}\right) -\partial _{E_{3}}\left( \tilde{g}-\hat{g}\right)
\left( E_{v},E_{2}\right) \right] \\
&=&\frac{s}{2}\partial _{E_{v}}\partial _{E_{v}}\left( f_{1}\right) .
\end{eqnarray*}

In the case of (\ref{other big curv}) we have 
\begin{eqnarray*}
\partial _{v}\left( \tilde{\Gamma}-\hat{\Gamma}\right) _{2v,4} &=&\frac{1}{2}%
\partial _{E_{v}}\left[ \partial _{E_{v}}\left( \tilde{g}-\hat{g}\right)
\left( E_{2},E_{4}\right) +\partial _{E_{2}}\left( \tilde{g}-\hat{g}\right)
\left( E_{4},E_{v}\right) -\partial _{E_{4}}\left( \tilde{g}-\hat{g}\right)
\left( E_{v},E_{2}\right) \right] \\
&=&\frac{s}{2}\partial _{E_{v}}\partial _{E_{v}}\left( f_{2}\right) .
\end{eqnarray*}

The result follows by combining the previous three displays with Lemma \ref%
{constr of f on M lemma}.
\end{proof}

\begin{proof}[ Proof of Lemma \protect\ref{local constr main lemma}]
Combining Propositions \ref{g inv prop} and \ref{Gamma control prop} with
Equation (\ref{curv via christ eqn}) we see that 
\begin{equation*}
\left\vert \left( \tilde{R}-\hat{R}\right) _{ijkl}\right\vert \leq O\left(
\varepsilon s\right) ,
\end{equation*}%
except if the quadruple corresponds, up to a symmetry of the curvature
tensor, to either $\left( \tilde{R}-\hat{R}\right) _{2vv3}$ or $\left( 
\tilde{R}-\hat{R}\right) _{2vv4},$ in which case we have 
\begin{eqnarray*}
\left( \tilde{R}-\hat{R}\right) _{2vv3} &=&\partial _{v}\partial _{v}\left(
f_{1}\right) +O\left( \varepsilon s\right) \text{ and} \\
\left( \tilde{R}-\hat{R}\right) _{2vv4} &=&\partial _{v}\partial _{v}\left(
f_{2}\right) +O\left( \varepsilon s\right) .
\end{eqnarray*}

Lemma \ref{local constr main lemma} follows from the previous two equations
and our choices of $f_{1}$ and $f_{2},$ provided $\varepsilon $ is
sufficiently small.
\end{proof}

\section{The Global Construction\label{global constr}}

In this section, we prove the $l^{th}$--Partially Geodesic Assertion and
hence Theorems \ref{main thm} and \ref{partiall geod thm}. The proof is by%
\textbf{\ }reverse induction, starting with the case when $l=n-1.$ The
strategy is to apply Lemma \ref{local constr main lemma} successively to the
elements of an open cover of $\mathcal{P}_{0}.$ When $l=n-1,$ this is all
that is needed. Otherwise, as in the proof of Lemma \ref{uber stru lemma},
we note that for each $k\in \left\{ l+1,\ldots ,n-1\right\} $, $\mathcal{G}%
_{k}\left( M\right) $ is compact. By our induction hypothesis, there is a $%
\delta >0$ so that for all $P\in \mathcal{G}_{k}\left( M\right) $ there is a
unit $v\in P$ with 
\begin{equation*}
\mathcal{I}_{R_{v}^{0}}\left( P\right) >\delta .
\end{equation*}%
Thus all sufficiently small deformations of $g$ have no partially geodesic $%
k $--planes for all $k\in \left\{ l+1,\ldots ,n-1\right\} .$ In particular,
the $l^{th}$--Partially Geodesic Assertion follows from the \vspace{2em}

\noindent \textbf{Modified} $\mathbf{l}^{th}$--\textbf{Partially Geodesic
Assertion. }\emph{\ Given }$l\in \left\{ 2,3,\ldots ,n-1\right\} ,$\emph{\ a
finite }$q\geq 2,$ \emph{and }$\xi >0$, \emph{there is a Riemannian metric }$%
\tilde{g}$ \emph{on} $M$ \emph{with} 
\begin{equation*}
\left\vert \tilde{g}-g\right\vert _{C^{q}}<\xi
\end{equation*}%
\emph{that has no partially geodesic }$l$\emph{--planes.}

\begin{proof}
Given $K>0,$ we combine Lemma \ref{local constr main lemma} with the
compactness of $\mathcal{P}_{0}$ to see that there is a finite open cover $%
\left\{ \pi ^{-1}\left( B\left( \pi \left( P_{i}\right) ,\rho _{i}\right)
\right) \cap B\left( \mathfrak{\sigma }\left( P_{i}\right) ,\rho _{i}\right)
\right\} _{i=1}^{G}$ of $\mathcal{P}_{0}$ whose elements are as in (\ref%
{effected set}). In particular, for each $i\in \left\{ 2,3,\ldots ,G\right\}
,$ there is a $\xi _{i}>0$ so that if 
\begin{equation*}
\left\vert g-\hat{g}\right\vert _{C^{q}}<\xi _{i},
\end{equation*}%
then the conclusion of Lemma \ref{local constr main lemma} holds on $\pi
^{-1}\left( B\left( \pi \left( P_{i}\right) ,\rho _{i}\right) \right) \cap
B\left( \mathfrak{\sigma }\left( P_{i}\right) ,\rho _{i}\right) .$ Set 
\begin{equation*}
\xi =\min\nolimits_{i}\left\{ \xi _{i}\right\} .
\end{equation*}%
Since $\mathcal{G}_{l}\left( M\right) \setminus \left\{
\bigcup\limits_{i=1}^{G}\pi ^{-1}\left( B\left( \pi \left( P_{i}\right)
,\rho _{i}\right) \right) \cap B\left( \mathfrak{\sigma }\left( P_{i}\right)
,\rho _{i}\right) \right\} $ is compact, there is a $\delta >0$ so that for
all 
\begin{equation*}
P\in \mathcal{G}_{l}\left( M\right) \setminus \left\{
\bigcup\limits_{i=1}^{G}\pi ^{-1}\left( B\left( \pi \left( P_{i}\right)
,\rho _{i}\right) \right) \cap B\left( \mathfrak{\sigma }\left( P_{i}\right)
,\rho _{i}\right) \right\} ,
\end{equation*}%
there is a $v\in P$ so that%
\begin{equation}
\left\vert \mathcal{I}_{R_{v}^{g}}\left( P\right) \right\vert >\delta .
\label{lemma D all over again}
\end{equation}

We will successively apply Lemma \ref{local constr main lemma} to the $\pi
^{-1}\left( B\left( \pi \left( P_{i}\right) ,\rho _{i}\right) \right) \cap
B\left( \mathfrak{\sigma }\left( P_{i}\right) ,\rho _{i}\right) $ and get a
sequence of metrics $g_{1},$ $g_{2},$ $\ldots ,g_{G}.$ To obtain $g_{1},$ we
apply Lemma \ref{local constr main lemma} with $g=\hat{g}$, $P=P_{1},$ and $%
\rho =\rho _{1}.$ This yields a deformation $g_{s}$ of $g.$ Let $\mathcal{P}%
_{s}$ be the set of partially geodesic $l$--planes for $g_{s}.$ It follows
from Part 2 of Lemma \ref{local constr main lemma} that for all sufficiently
small $s,$ 
\begin{equation}
\text{ }\mathcal{P}_{s}\dbigcap \pi ^{-1}\left( B\left( \pi \left(
P_{1}\right) ,\rho _{1}\right) \right) \cap B\left( \mathfrak{\sigma }\left(
P_{1}\right) ,\rho _{1}\right) =\emptyset .  \label{Lemma 21 first app}
\end{equation}%
By combining (\ref{lemma D all over again}) and (\ref{Lemma 21 first app}),
we see that for sufficiently small $s,$ 
\begin{equation*}
\mathcal{P}_{s}\subset \bigcup\limits_{i=2}^{G}\pi ^{-1}\left( B\left( \pi
\left( P_{i}\right) ,\rho _{i}\right) \right) \cap B\left( \mathfrak{\sigma }%
\left( P_{i}\right) ,\rho _{i}\right) .
\end{equation*}

Moreover, by further restricting $s,$ we can ensure that $g_{s}$ is close
enough to $g$ in the $C^{q}$--topology so that%
\begin{equation*}
\left\vert g_{s}-g\right\vert _{C^{q}}<\xi .\text{ }
\end{equation*}

We let $g_{1}=g_{s}$ for some $s$ as above. Assume, by induction, that for
some $k\in \left\{ 1,\ldots ,G-1\right\} ,$ we have constructed a metric $%
g_{k}$ so that the following hold:

$\left( \mathbf{Hypothesis}\text{ }\mathbf{1}_{k}\right) $ If $\mathcal{P}%
_{g_{k}}$ is the set of $l$--dimensional partially geodesic subspaces for $%
g_{k},$ then 
\begin{equation*}
\mathcal{P}_{g_{k}}\subset \bigcup\limits_{i=k+1}^{G}\pi ^{-1}\left( B\left(
\pi \left( P_{i}\right) ,\rho _{i}\right) \right) \cap B\left( \mathfrak{%
\sigma }\left( P_{i}\right) ,\rho _{i}\right) .
\end{equation*}

$\left( \mathbf{Hypothesis}\text{ }\mathbf{2}_{k}\right) $%
\begin{equation*}
\left\vert g_{k}-g\right\vert _{C^{q}}<\xi .
\end{equation*}

It follows from Hypothesis $1_{k}$ that there is $\delta >0$ so that for all 
\begin{equation*}
P\in \mathcal{G}_{l}\left( M\right) \setminus \left\{
\bigcup\limits_{i=k+1}^{G}\pi ^{-1}\left( B\left( \pi \left( P_{i}\right)
,\rho _{i}\right) \right) \cap B\left( \mathfrak{\sigma }\left( P_{i}\right)
,\rho _{i}\right) \right\} ,
\end{equation*}%
there is a $v\in P$ so that 
\begin{equation}
\left\vert \mathcal{I}_{R_{v}^{g_{k}}}\left( P\right) \right\vert >\delta .
\label{old dist}
\end{equation}%
Since $\left\vert g_{k}-g\right\vert _{C^{q}}<\xi ,$ we can apply Lemma \ref%
{local constr main lemma} with $\hat{g}=g_{k}$ , $P=P_{k+1},$ and $\rho
=\rho _{k+1}.$ This yields a deformation $g_{s}$ of $g_{k}$ so that for all
sufficiently small $s>0,$

\begin{equation*}
\left\vert g_{s}-g\right\vert _{C^{q}}<\xi .
\end{equation*}%
In other words, Hypothesis $2_{k+1}$ holds.

To establish Hypothesis $1_{k+1},$ combine Part 2 of Lemma \ref{local constr
main lemma} with (\ref{old dist}) to see that $g_{s}$ has no partially
geodesic $l$--dimensional subspaces in%
\begin{equation*}
\mathcal{G}_{l}\left( M\right) \setminus \left\{
\bigcup\limits_{i=k+2}^{G}\pi ^{-1}\left( B\left( \pi \left( P_{i}\right)
,\rho _{i}\right) \right) \cap B\left( \mathfrak{\sigma }\left( P_{i}\right)
,\rho _{i}\right) \right\} ,
\end{equation*}%
provided $s$ is positive and sufficiently small. So Hypothesis $1_{k+1}$
also holds.
\end{proof}

\end{document}